\numberwithin{equation}{section}
\theoremstyle{plain}
\newtheorem{thm}{Theorem}[section]
\newtheorem{prop}[thm]{Proposition}
\newtheorem{lem}[thm]{Lemma}
\newtheorem{cor}[thm]{Corollary}
\newtheorem{exam}[thm]{Example}
\newtheorem{defn}[thm]{Definition}
\newtheorem{rmk}[thm]{Remark}
\newcommand{\F}{\mathbb{F}}
\newcommand{\Q}{\mathbb{Q}}
\newcommand{\R}{\mathbb{R}}
\newcommand{\Z}{\mathbb{Z}}
\newcommand{\cA}{\mathcal{A}}
\newcommand{\cF}{\mathcal{F}}
\newcommand{\frS}{\mathfrak{S}}
\newcommand{\eps}{\epsilon}
\newcommand{\bk}{\Bbbk}
\newcommand{\bx}{\mathbf{x}}
\newcommand{\vep}{\varepsilon}
\newcommand{\Aut}{\mathrm{Aut}}
\newcommand{\even}{\mathrm{even}}
\newcommand{\Gal}{\mathrm{Gal}}
\newcommand{\GL}{\mathrm{GL}}
\newcommand{\Hom}{\mathrm{Hom}}
\newcommand{\odd}{\mathrm{odd}}
\newcommand{\reg}{\mathrm{reg}}
\newcommand{\Sh}{\mathrm{Sh}}
\newcommand{\sh}{\mathrm{sh}}
\newcommand{\Surj}{\mathrm{Surj}}
\newcommand{\vol}{\mathrm{vol}}
\newcommand{\Tri}{\angle}
\newcommand{\sha}{\mathcyr{sh}}
\newcommand{\inj}{\hookrightarrow}
\newcommand{\resp}{resp.\ }
\newcommand{\wh}[1]{\widehat{#1}}
\title[Finite real MZVs generate $Z$]{Finite real multiple zeta values generate the whole space $Z$}
\author{Seidai Yasuda}
\address{Department of Mathematics, Graduate School of Science, 
Osaka university, Tokyonaka, Osaka 560-0043, Japan}
\email{s-yasuda@math.sci.osaka-u.ac.jp}
\keywords{Finite real multiple zeta values}
\subjclass[2010]{Primary 11M32; Secondary 05A19}
\begin{document}
\maketitle
\begin{abstract}
We prove that the $\Q$-vector space generated by the multiple
zeta values is generated by the finite real multiple zeta values
introduced by Kaneko and Zagier.
\end{abstract}


\section{Introduction}
Masanobu Kaneko and Don Zagier \cite{KZ} (see also \cite{K})
recently introduced, for each (not necessarily admissible) index 
$\bk = (k_1, \ldots, k_n)$, a real number 
$\zeta^\cF(\bk)$, which they called 
(a regularization with respect to the series
expression of) the finite real multiple zeta value
with index $\bk$.
If we denote by $k = |\bk|$ the weight of $\bk$,
then $\zeta^\cF(\bk)$ belongs to the $\Q$-vector
space $Z_k$ generated by the usual multiple zeta
values of weight $k$.

For each integer $k \ge 0$,
we let $Z^\cF_k$ denote the $\Q$-linear subspace of
$Z_k$ spanned by the finite real multiple
zeta values of weight $k$.
Masanobu Kaneko kindely informed the author that
a numerical experiment by Kaneko and Zagier
suggests that the equality $Z_k^\cF = Z_k$ holds.
The aim of this article is to give a proof of 
this equality,
which is stated in Theorem \ref{thm:main}
as the main result of this article.

Kaneko and Zagier also introduced in \cite{KZ} a variant of
$\zeta^\cF(\bk)$, by using the regularization with respect to 
the iterated integral expression instead of that with respect
to the series expression. We prove in Theorem \ref{thm:main3} 
that an analogous equality holds for this variant.

We prove the equality $Z_k^\cF = Z_k$ in an induction of the pair
$(k,n)$ (with the lexicographic ordering) 
of the weight and the depth of an index.
Let $\bk$ be an index of depth $n$.
It is easy to see that
the finite real multiple zeta value $\zeta^\cF(\bk)$
belongs to the $\Q$-linear subspace $Z_{k,\le n}$ 
of $Z_k$ generated by the multiple zeta values
of weight $k$ and depth at most $n$.
Let $Z_{k,+}$ denote the $\Q$-linear subspace of 
$Z_k$ generated by
$$
\sum_{k', k'' \ge 1 \atop k' + k'' = k}
Z_{k'} \cdot Z_{k''}.
$$
One can show, by using the parity result
(\cite[Corollary 8]{IKZ}, \cite{T}) and \cite[(8.6)]{IKZ}, 
that for any index $\bk$ of depth $n$,
the finite real multiple zeta value $\zeta^\cF(\bk)$
belongs to the $\Q$-linear subspace 
$Z_{k,+} + Z_{k,\le n-1}$.
%
%
We introduce a variant 
$\zeta^{\natural,\cF}(\bk)$ of the finite 
real multiple zeta value $\zeta^\cF(\bk)$.
%
One can check that the $\Q$-linear subspace
of $\R$ generated by the variants 
$\zeta^{\natural, \cF}(\bk)$ 
of weight $k$ coincides with $Z^\cF_k$.
In Proposition \ref{prop:main},
we give an explicit description
of $\zeta^{\natural,\cF}(\bk) \in Z_k$
of weight $k$ and depth $n$
in terms of the multiple zeta values
of weight $n-1$ modulo $Z_{k,+} + Z_{k,\le n-2}$.
%
%
%
%
%
We prove the main theorem by
using this description and some simple linear
algebras.

Among a lot of relations between the multiple zeta values,
we only use the regularized double shuffle relations
in the proof of the main result of this paper.
Hence the main result of this paper can be generalized
without difficulty 
to other multiple zeta values satisfying the
regularized double shuffle relations, e.g.,
to motivic multiple zeta values introduced by Brown \cite{Brown} 
(using an idea of Goncharov) and to $p$-adic multiple zeta values 
introduced by Furusho \cite{Furusho}.

The generalization to the motivic multiple zeta values
will have the following important application in future: 
the author will propose, in his joint work with M.\ Hirose in preparation, 
a conjectural formula between Deligne's $p$-adic multiple zeta values 
and some finite multiple harmonic sums.
If we assume this conjectural formula, then the generalization of the
main result of this paper to the motivic multiple zeta values
gives a surjetive homomorphism of $\Q$-algebras from the $\Q$-linear
span of motivic multiple zeta values modulo motivic $\zeta(2)$
to the $\Q$-subalgebra $Z_\cA$ of 
$\cA = \left( \prod_p \F_p \right) \otimes_\Z \Q$
generated by the multiple zeta values $\zeta^\cA(\bk)$ 
with values in $\cA$ (see \cite{KZ} and \S2.6 for the definition
of $\zeta^\cA(\bk)$).
This will give us a lot of information on the structures of
the $\Q$-algebra $Z_\cA$.

\section{Notation}
%
%

\subsection{Notation for indices}
We denote by $\Z_{\ge 0}$ and $\Z_{\ge 1}$ 
the set of non-negative integers,
and the set of positive integers, respectively.

Let $I$ denote the following set:
$$
I = \coprod_{n \in \Z_{\ge 0}} 
(\overbrace{\Z_{\ge 1} \times 
\cdots \times \Z_{\ge 1}}^{n \text{ times}}).
$$
An element in $I$ is called an index.
For an index $\bk = (k_1,\ldots,k_n)$,
the integer $k_1 + \cdots + k_n$ is called the
weight of $\bk$ and is denoted by $|\bk|$.
(For $n=0$, we understand $|\bk| =0$).

The unique index $\bk$ with $|\bk| =0$ is
called the empty index and is denoted by $\emptyset$.

\subsection{Multiple zeta values}
Let us recall the definition of multiple zeta values
introduced by Hoffman \cite{Hoffman} and independently by
Zagier \cite{Zagier}.
We say that an index $\bk=(k_1,\ldots,k_n)$ 
is an admissible index if
$\bk=\emptyset$ or $k_n \ge 2$.
If $\bk=(k_1,\ldots,k_n)$ is an admissible index,
then the infinite sum
\begin{equation}\label{eq:sum}
\sum_{0 < m_1 < \ldots < m_n}
\frac{1}{m_1^{k_1} \cdots m_n^{k_n}}
\end{equation}
is absolutely convergent 
in the field $\R$ of real numbers.
We denote by $\zeta(\bk)$ or
by $\zeta(k_1,\ldots,k_n)$ the real number
expressed as the infinite sum \eqref{eq:sum}.

\subsection{Finite real multiple zeta values}

Let $\bk = (k_1,\ldots,k_n)$ be an index.
According to Kaneko and Zagier \cite{KZ}, \cite{K}
we define (the regularization with respect to
the series expression of) the finite real 
multiple zeta value
$\zeta^\cF(\bk)=\zeta^\cF(k_1,\ldots,k_n)$ 
as the limit
$$
\zeta^\cF(\bk) = \lim_{M \to \infty}
\sum_{m_1,\ldots,m_n \in \Z
\atop {0 < |m_1|, \ldots, |m_n| < M
\atop \frac{1}{m_1} > \cdots > \frac{1}{m_n}}}
\frac{1}{m_1^{k_1} \cdots m_n^{k_n}}.
$$
We can check that this limit exists in $\R$.
It is clear from the definition that
the finite real multiple zeta values $\zeta^\cF(\bk)$
satisfy the shuffle product formulae with respect
to the series expressions.

\subsection{Finite real multiple zeta values with $\natural$}

Let us introduce the following
open subset $\Tri_{n,\R}$ of $\R^n$:
$$
\Tri_{n,\R} = \left\{
(t_1,\ldots,t_n) \in (\R^\times)^n
\ \left| \ \frac{1}{t_1} > \cdots > \frac{1}{t_n} 
\right. \right\}.
$$
For $\bx = (x_1,\ldots,x_n) \in \R^n$, we let
$w(\bx)$ denote the limit
$$
w(\bx) = \lim_{\vep \to 0}
\frac{\vol(B(\bx,\vep) \cap \Tri_{n,\R})}{
\vol(B(\bx,\vep))}.
$$
Here $B(\bx,\vep)$ denotes the open ball
in $\R^n$ of radius $\vep$ centered at $\bx$.
For sufficiently small $\vep$ (which may depend on $\bx$),
we have $w(\bx) = \frac{\vol(B(\bx,\vep) 
\cap \Tri_{n,\R})}{\vol(B(\bx,\vep))}$, and
$w(\bx)$ is $0$ or a rational number whose inverse is
a positive integer.
If $\bx \in \Tri_{n,\R}$, then we have $w(\bx)=1$.
If $\bx$ does not belong to the closure of
$\Tri_{n,\R}$ in $\R^n$, then we have
$w(\bx) =0$.
For example when $n=2$, we have
$$
w((x_1,x_2)) = \left\{
\begin{array}{ll}
1, & \text{if } 0 < x_1 < x_2, x_1 < x_2 < 0, \text{ or }
x_2 < 0 < x_1, \\
\frac12, & \text{if }x_1 = x_2 \text{ or }
x_1 x_2 = 0, \\
0, \text{otherwise}.
\end{array}
\right.
$$
We define (the regularization with respect to
the series expression of) the finite real
multiple zeta values
$\zeta^{\natural, \cF}(\bk)
=\zeta^{\natural,\cF}(k_1,\ldots,k_n)$ with $\natural$
as the limit
$$
\zeta^{\natural, \cF}(\bk) = \lim_{M \to \infty}
\sum_{m_1,\ldots,m_n \in \Z
\atop 0 < |m_1|, \ldots, |m_n| < M}
\frac{w((m_1,\ldots,m_n))}{m_1^{k_1} \cdots m_n^{k_n}}
$$
in $\R$. We can check that this limit exists in $\R$.

\subsection{A relation between $\zeta^\cF(\bk)$ and $\zeta^{\natural,\cF}(\bk)$}
For integers $m$, $n$ with $1 \le m \le n$,
we denote by $\Surj(n,m)$ the set of
surjective maps from the set $\{1,\ldots,n\}$ 
to the set $\{1,\ldots,m\}$ which preserve
the orderings $\le$.
Let $\frS_n = \Aut(\{1,\ldots,n\})$ be the
$n$-th symmetric group.
For $\phi \in \Surj(n,m)$ we set
$$
G_\phi = \{ \sigma \in \frS_n\ |\ \phi \circ \sigma = \phi
\}.
$$
It follows from the definition that
the group $G_\phi$ is equal to the
direct product
$\prod_{j=1}^m \Aut(\phi^{-1}(j))$.
Since $\Aut(\phi^{-1}(j))$ is isomorphic to
the $\sharp \phi^{-1}(j)$-th symmetric group
for each $j$, we have
$$
\sharp G_\phi = \prod_{j=1}^m (\sharp \phi^{-1}(j))!.
$$

Let $n \ge 1$ be an integer
and let $\bk=(k_1,\ldots,k_n)$ 
be an index of depth $n$.
Let $m$ be an integer with $1 \le m \le n$.
For $\phi \in \Surj(n,m)$, we set
$$
\phi_* \bk = \left( 
\sum_{i \in \phi^{-1}(1)}k_i, 
\sum_{i \in \phi^{-1}(2)}k_i,
\ldots, \sum_{i \in \phi^{-1}(m)}k_i
\right).
$$

\begin{lem} \label{lem1}
We have
$$
\zeta^{\natural,\cF}(\bk) =
\sum_{1 \le m \le n} \sum_{\phi \in \Surj(n,m)} 
\frac{1}{\sharp G_\phi}
\zeta^\cF(\phi_* \bk).
$$
\end{lem}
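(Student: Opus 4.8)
The plan is to reorganize the series defining $\zeta^{\natural,\cF}(\bk)$ according to the coincidence pattern of the summation tuple $(m_1,\ldots,m_n)$, and to identify each resulting family with an order-preserving surjection $\phi \in \Surj(n,m)$. The computation rests entirely on evaluating the weight $w((m_1,\ldots,m_n))$ at an integer point.

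First I would evaluate $w$ at an integer point with all $m_i \neq 0$. Since such a point lies in $(\R^\times)^n$, the coordinate hyperplanes are irrelevant near it, and the only walls of $\Tri_{n,\R}$ that can pass through it are $\{1/t_i = 1/t_{i+1}\}$. As recalled in the excerpt, $w$ vanishes unless the point lies in the closure of $\Tri_{n,\R}$, i.e.\ unless $1/m_1 \ge \cdots \ge 1/m_n$; equivalently $\{1,\ldots,n\}$ breaks into maximal runs of equal $m_i$, which are consecutive and determine a unique $\phi \in \Surj(n,m)$ with $\phi^{-1}(j)$ the $j$-th run. Writing $s = t - \bx$ and linearizing $1/t_i - 1/t_{i+1}$ at $\bx$ (where $m_i = m_{i+1}$) shows that, to first order, the active constraint $1/t_i > 1/t_{i+1}$ inside a run becomes $s_i < s_{i+1}$, while the inter-run constraints are strict and cut out nothing. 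Thus the local cone of $\Tri_{n,\R}$ is, to first order, $\{s : s_i < s_{i+1} \text{ for } i, i+1 \text{ in a common run}\}$.

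Then I would compute the normalized solid angle of this cone. The group $G_\phi = \prod_j \Aut(\phi^{-1}(j))$ acts on $\R^n$ by permuting coordinates within each run; this action is orthogonal, preserves the round ball $B(\bx,\vep)$, and the $\sharp G_\phi$ chambers obtained by applying $G_\phi$ to the cone tile $\R^n$ up to measure zero. Hence all have equal normalized solid angle, giving $w(\bx) = 1/\sharp G_\phi$. (Equivalently one computes the Gaussian measure of the product cone $\prod_j C_j$, where $C_j$ is the increasing-order chamber in $\R^{\phi^{-1}(j)}$.) Finally I would assemble the sum: the tuples with $w \neq 0$ biject with pairs $(\phi, (M_1,\ldots,M_m))$, where $M_1,\ldots,M_m$ are nonzero integers with $1/M_1 > \cdots > 1/M_m$, via $m_i = M_{\phi(i)}$. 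Under this bijection $\prod_i m_i^{k_i} = \prod_j M_j^{(\phi_*\bk)_j}$, and the bound $0 < |m_i| < M$ matches $0 < |M_j| < M$. Grouping the truncated sum accordingly, the inner sum over the $M_j$ is exactly the truncation of $\zeta^\cF(\phi_*\bk)$, and letting $M \to \infty$ while interchanging the finite sum over $\phi$ with the limit yields the claim.

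The main obstacle is the careful evaluation of $w(\bx)$ at a boundary integer point: both the linearization of the walls $\{1/t_i = 1/t_{i+1}\}$ and the symmetry argument identifying the solid angle with $1/\sharp G_\phi$. Everything else is bookkeeping, provided one checks that the truncation bounds match so that each inner limit is genuinely $\zeta^\cF(\phi_*\bk)$.
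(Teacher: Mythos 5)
Your proof is correct; the paper states Lemma \ref{lem1} without any proof at all, and your argument---evaluating $w((m_1,\ldots,m_n)) = 1/\sharp G_\phi$ at integer points with $1/m_1 \ge \cdots \ge 1/m_n$ via the local chamber structure, then regrouping the truncated sum along order-preserving surjections $\phi \in \Surj(n,m)$ and passing to the limit---is precisely the verification the paper leaves to the reader. One small strengthening worth noting: inside a run the constraint $1/t_i > 1/t_{i+1}$ is \emph{exactly} equivalent to $s_i < s_{i+1}$ in a small ball (since $1/t_i - 1/t_{i+1} = (s_{i+1}-s_i)/(t_i t_{i+1})$ with $t_i t_{i+1} > 0$ there), not merely to first order, so the cone identification and hence $w(\bx) = 1/\sharp G_\phi$ hold at finite radius, consistent with the paper's remark that $w(\bx)$ is attained for sufficiently small $\vep$.
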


\begin{exam}
For example when $n=2$, we have
$$
\zeta^{\natural, \cF}(k_1,k_2)
= \zeta^\cF(k_1,k_2) + 
\frac{1}{2} \zeta^\cF(k_1+k_2).
$$
\end{exam}

\subsection{The case of totally odd indices}
We conclude this section by mentioning,
although it will not be used in the main
argument of this article,
that the finite real multiple zeta values
$\zeta^{\natural,\cF}(\bk)$ with $\natural$
have the following remarkable property:

\begin{prop}
Let $\bk =(k_1,\ldots,k_n)$ be a non-empty index.
Suppose that $k_1,\ldots, k_n$ are odd numbers.
Then we have
$\zeta^{\natural,\cF}(\bk) = 0$.
\end{prop}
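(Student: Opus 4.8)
The plan is to use the oddness of the $k_i$ to rewrite the defining sum as a signed sum over sign patterns, and to show that each finite truncation already vanishes. Writing each summation variable as $m_i = \eps_i a_i$ with $a_i = |m_i| \ge 1$ and $\eps_i \in \{\pm 1\}$, and using $\eps_i^{k_i} = \eps_i$ (as $k_i$ is odd), I regroup the truncated sum for a fixed cut-off $M$ as
\[
\sum_{\substack{m_1,\ldots,m_n \in \Z\\ 0 < |m_i| < M}} \frac{w(m_1,\ldots,m_n)}{m_1^{k_1}\cdots m_n^{k_n}}
= \sum_{1 \le a_i < M} \frac{W(a_1,\ldots,a_n)}{a_1^{k_1}\cdots a_n^{k_n}},
\qquad
W(\mathbf a) = \sum_{\eps \in \{\pm 1\}^n} \Bigl(\prod_{i=1}^n \eps_i\Bigr) w(\eps_1 a_1,\ldots,\eps_n a_n),
\]
for $\mathbf a = (a_1,\ldots,a_n)$ with $a_i > 0$. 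It therefore suffices to prove $W(\mathbf a) = 0$ for every $\mathbf a$ in the open positive orthant; the proposition then follows immediately, as every truncation is zero.

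To prove $W(\mathbf a) = 0$ I would first move the sign flips off the point and onto the region. Each $\eps$ acts on $\R^n$ as the orthogonal involution $(x_1,\ldots,x_n) \mapsto (\eps_1 x_1,\ldots,\eps_n x_n)$; since it is an isometry preserving volume and carrying balls to balls, the very definition of $w$ shows that $w(\eps_1 a_1,\ldots,\eps_n a_n)$ equals the local density at $\mathbf a$ of the reflected region $\eps\,\Tri_{n,\R}$. As the density operation $h \mapsto \lim_{\vep \to 0} \vol(B(\mathbf a,\vep))^{-1}\int_{B(\mathbf a,\vep)} h$ is linear, $W(\mathbf a)$ is the local density at $\mathbf a$ of the signed indicator $F = \sum_{\eps} \bigl(\prod_i \eps_i\bigr)\mathbf 1_{\eps\,\Tri_{n,\R}}$. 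Hence it is enough to show $F$ vanishes almost everywhere near $\mathbf a$. Within the open positive orthant all the walls of the regions $\eps\,\Tri_{n,\R}$ reduce to the measure-zero set $\bigcup_{i<j}\{x_i = x_j\}$, so this reduces to showing $F(\bx) = 0$ at every point $\bx$ of the positive orthant with pairwise distinct coordinates.

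At such a generic $\bx$ no reflection $(\eps_1 x_1,\ldots,\eps_n x_n)$ meets a wall of $\Tri_{n,\R}$, so $w$ there is the indicator of the condition $\eps_1/x_1 > \cdots > \eps_n/x_n$. The heart of the matter is thus the purely combinatorial identity: for pairwise distinct positive reals $b_1,\ldots,b_n$ (here $b_i = 1/x_i$),
\[
\sum_{\substack{\eps \in \{\pm 1\}^n\\ \eps_1 b_1 > \cdots > \eps_n b_n}} \ \prod_{i=1}^n \eps_i = 0.
\]
To prove it I would observe that in a strictly decreasing sequence $\eps_1 b_1 > \cdots > \eps_n b_n$ no negative entry can precede a positive one, so every admissible $\eps$ has the block form $(\underbrace{+,\ldots,+}_{r},\underbrace{-,\ldots,-}_{n-r})$, and is admissible exactly when $b_1 > \cdots > b_r$ and $b_{r+1} < \cdots < b_n$. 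Such an $r$ exists precisely when $b_1,\ldots,b_n$ is unimodal (strictly decreasing, then strictly increasing), in which case the admissible cut points are the two consecutive integers $v-1$ and $v$ straddling the position $v$ of the minimal $b_i$; their contributions $(-1)^{n-r}$ for $r = v-1$ and $r = v$ are opposite and cancel. This gives the identity, hence $F \equiv 0$ off the diagonals, hence $W(\mathbf a) = 0$.

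The step I expect to be most delicate is this last identity, and within it the claim that admissible cut points, when present, always occur as a single canceling pair rather than singly. The mechanism to get right is that for an admissible $r$ the comparisons strictly to the left of the cut are forced to be descents and those strictly to the right forced to be ascents, while the comparison $b_r$ versus $b_{r+1}$ at the cut itself is free; its sign then determines whether $r-1$ or $r+1$ is the unique further admissible cut point, and an easy contradiction rules out three consecutive admissible cuts. This is exactly what forbids an odd, non-canceling number of admissible patterns. By contrast the transfer-to-density step is routine once one notes that $w$ depends on the region only up to sets of measure zero, which is precisely what lets the fractional boundary values of $w$ (those arising when some $|m_i| = |m_j|$) take care of themselves.
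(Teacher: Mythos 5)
Your proposal is correct and takes essentially the same approach as the paper: the paper's own (one-line) proof consists precisely of the assertion that each truncated sum $\sum_{0<|m_1|,\ldots,|m_n|<M} w((m_1,\ldots,m_n))/(m_1^{k_1}\cdots m_n^{k_n})$ vanishes for every $M>0$, which is exactly what you prove. Your sign-pattern block decomposition, the unimodal pairing $r\in\{v-1,v\}$, and the measure-zero handling of the fractional values of $w$ simply supply the details that the paper leaves to the reader.
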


This is proved by showing the equality
$$
\sum_{m_1,\ldots,m_n \in \Z
\atop 0 < |m_1|, \ldots, |m_n| < M}
\frac{w((m_1,\ldots,m_n))}{m_1^{k_1} \cdots m_n^{k_n}}
= 0
$$
for any real number $M > 0$.

Kaneko and Zagier \cite{KZ} defined,
as the collection of finite multiple zeta values
studied by \cite{Hoffman2} and \cite{Zhao}, 
the multiple zeta value $\zeta^\cA(\bk)$
which belongs to the $\Q$-algebra 
$\cA = \left( \prod_p \F_p \right) \otimes_\Z \Q$.
We can consider the variant
$\zeta^{\natural,\cA}(\bk)$
with $\natural$ for the multiple zeta value $\zeta^\cA(\bk)$.
As is expected from the conjecture by
Kaneko and Zagier on a relation between
the multiple zeta values $\zeta^\cA(\bk)$ in $\cA$
and the finite real multiple zeta values,
we have the following:

\begin{lem}
Let $\bk=(k_1,\ldots,k_n)$ be a non-empty index
such that $k_1, \ldots, k_n$ are odd integers.
Then the multiple zeta value
$\zeta^{\natural, \cA}(\bk)$ with $\natural$
in $\cA$ is equal to zero.
\end{lem}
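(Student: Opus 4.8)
The plan is to mirror the proof of the preceding Proposition for $\zeta^{\natural,\cF}$, replacing the exact vanishing of the truncated real sum by the vanishing, in $\F_p$, of the $p$-component of $\zeta^{\natural,\cA}(\bk)$ for every prime $p>n$. Since an element of $\cA=\left(\prod_p\F_p\right)\otimes_\Z\Q$ is zero as soon as its $p$-components vanish for all but finitely many $p$, it suffices to treat the primes $p>n$; for such $p$ every rational coefficient occurring below (whose denominator divides $\sharp G_\phi$, hence $n!$) is $p$-integral and reduces to an element of $\F_p$.

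First I would record the $\cA$-analogue of Lemma \ref{lem1}. Its proof is purely combinatorial, expressing the weight $w$ as a sum over the order-preserving surjections $\phi$ with coefficient $1/\sharp G_\phi$, and therefore applies verbatim to $\zeta^{\natural,\cA}$, giving
$$
\zeta^{\natural,\cA}(\bk) = \sum_{1\le m\le n}\sum_{\phi\in\Surj(n,m)}\frac{1}{\sharp G_\phi}\,\zeta^\cA(\phi_*\bk).
$$
Unwinding the definition of $\zeta^\cA$ and grouping the merged orderings, the $p$-component of the right-hand side becomes a single finite weighted multiple harmonic sum
$$
\Sigma_p=\sum_{\mathbf{m}}\frac{w(\mathbf{m})}{m_1^{k_1}\cdots m_n^{k_n}}\in\F_p ,
$$
taken over tuples $\mathbf{m}=(m_1,\ldots,m_n)$ ranging through a negation-stable system of nonzero representatives modulo $p$, where $w(\mathbf{m})=1/\prod_j(\sharp B_j)!$ when the values $1/m_1,\ldots,1/m_n$ are weakly decreasing (with blocks $B_j$ of coincident values) and $w(\mathbf{m})=0$ otherwise. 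This is the exact $\F_p$-counterpart of the truncated real sum appearing in the proof of the Proposition.

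The heart of the matter is then the same sign-reversal cancellation. Grouping the terms of $\Sigma_p$ according to the tuple $\mathbf{a}$ of absolute values of the $m_i$ and using that every $k_i$ is odd, so that $(s_i a_i)^{-k_i}=s_i\,a_i^{-k_i}$ for $s_i\in\{\pm1\}$, I would rewrite $\Sigma_p=\sum_{\mathbf{a}}(a_1^{k_1}\cdots a_n^{k_n})^{-1}\,W(\mathbf{a})$ and reduce everything to the single geometric identity
$$
W(\mathbf{a})=\sum_{s\in\{\pm1\}^n}\Big(\prod_{i=1}^n s_i\Big)\,w(s_1 a_1,\ldots,s_n a_n)=0 ,
$$
valid for every $\mathbf{a}$. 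This identity is convention-independent and already underlies the real case: flipping the sign of a coordinate $i_0$ of \emph{maximal} absolute value moves $1/m_{i_0}$ across $0$ while keeping it no larger in absolute value than every other $1/m_i$, so that it lies in the open gap straddled by $0$; hence the flip disturbs neither the weak ordering of the $1/m_i$ nor the block sizes $\sharp B_j$, preserving $w$ while reversing the sign $\prod_i s_i$, and pairing the terms of $W(\mathbf{a})$ into cancelling pairs. Consequently $\Sigma_p=0$ in $\F_p$ for every $p>n$, whence $\zeta^{\natural,\cA}(\bk)=0$.

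The step I expect to be the main obstacle is the bookkeeping of \emph{ties}: when several of the $a_i$ coincide, or when two representatives become congruent modulo $p$, the coordinate to be flipped is no longer unique and a careless involution can alter the block sizes $\sharp B_j$, and hence $w$. The clean way around this is precisely the passage to the grouped sum $W(\mathbf{a})$ above, where the cancellation becomes a statement about the local densities of the region $\Tri_{n,\R}$ alone and can be checked once and for all, independently of $p$. The only genuinely arithmetic input is that $p>n$ makes $\prod_j(\sharp B_j)!$ invertible in $\F_p$, so that reduction modulo $p$ commutes with the rational identity $W(\mathbf{a})=0$; the choice of a negation-stable representative system and the compatibility of the defining conventions with this symmetry are the remaining routine points to verify.
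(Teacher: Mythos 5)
Your overall strategy coincides with the paper's (whose own proof is only a two-line sketch): identify, for $p$ large, the component at $p$ of $\zeta^{\natural,\cA}(\bk)$ with the truncated weighted sum $\sum_{0<|m_i|<p/2} w((m_1,\ldots,m_n))/\prod_i m_i^{k_i}$ over a negation-stable set of representatives, and then show that this sum vanishes because every $k_i$ is odd. Your identification step and the reduction to the identity $W(\mathbf{a})=\sum_{s\in\{\pm1\}^n}\bigl(\prod_i s_i\bigr)\,w(s_1a_1,\ldots,s_na_n)=0$ are sound, as is the remark that $p>n$ makes all denominators invertible. The genuine gap is in your proof of this identity. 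The pairing ``flip the sign of a coordinate of maximal absolute value'' does \emph{not} preserve $w$ term by term when the maximal absolute value is attained by more than one coordinate: your assertion that after the flip $1/m_{i_0}$ ``lies in the open gap straddled by $0$'' requires $|1/m_{i_0}|$ to be \emph{strictly} smaller than every other $|1/m_i|$, which fails exactly in the tied case. Concretely, for $n=2$ and $\mathbf{a}=(a,a)$, flipping the first coordinate pairs $(a,a)$ with $(-a,a)$ and $(a,-a)$ with $(-a,-a)$; by the paper's table for $n=2$ the corresponding weights are $\tfrac{1}{2},0,1,\tfrac{1}{2}$, so neither pair cancels, and $W(\mathbf{a})=\tfrac{1}{2}-0-1+\tfrac{1}{2}=0$ holds only globally, not pairwise. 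Tied tuples cannot be waved away: they are precisely the terms with $w\notin\{0,1\}$, i.e.\ the reason the $\natural$-regularization exists, and they occur for every $p$. Your closing paragraph flags ties as the main obstacle, but the proposed remedy --- passing to $W(\mathbf{a})$, where the identity ``can be checked once and for all'' --- is circular, since $W(\mathbf{a})=0$ is exactly what needs proof and the only argument you give for it is the flawed pairing.

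The gap is repairable with one extra idea, which your text does not contain: use that $w$ is by definition a local density. Writing $w(\bx)=\lim_{\vep\to0}\vol(B(\bx,\vep)\cap\Tri_{n,\R})/\vol(B(\bx,\vep))$ and substituting $\delta_i\mapsto s_i\delta_i$ (the ball is stable under coordinate sign changes), one gets
$$
W(\mathbf{a})=\lim_{\vep\to0}\frac{1}{\vol(B(0,\vep))}\int_{B(0,\vep)}
\sum_{s\in\{\pm1\}^n}\Bigl(\prod_{i=1}^n s_i\Bigr)\,
\mathbf{1}_{\Tri_{n,\R}}\bigl(s_1(a_1+\delta_1),\ldots,s_n(a_n+\delta_n)\bigr)\,d\delta .
$$
For almost every $\delta$ the numbers $a_i+\delta_i$ are positive and pairwise distinct, and for such generic tuples your flip argument is valid and kills the integrand pointwise (both values $\pm 1/(a_{i_0}+\delta_{i_0})$ lie strictly inside the gap between the negative and positive inverses of the other coordinates, so membership in $\Tri_{n,\R}$ is unchanged while $\prod_i s_i$ changes sign). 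Hence $W(\mathbf{a})=0$ for \emph{all} $\mathbf{a}$, ties included, and with this repair your argument is complete and matches the paper's intended proof.
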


The lemma is proved by showing that,
for a sufficently large prime number $p$,
the component at $p$ of
$\zeta^{\natural,\cA}(\bk)$ is equal to
$$
\sum_{m_1,\ldots,m_n \in \Z
\atop 0 < |m_1|, \ldots, |m_n| < p/2}
\frac{w((m_1,\ldots,m_n))}{m_1^{k_1} \cdots m_n^{k_n}}
= 0.
$$

\section{A key proposition}

\subsection{Notation}

For an integer $k \ge 0$, we denote by $Z_k$
the $\Q$-vector space generated by the multiple
zeta values of weight $k$.
By $Z_{k,+}$ we denote the
$\Q$-linear subspace
$$
Z_{k,+} = \sum_{k' , k'' \ge 1
\atop k' + k'' = k} Z_{k'} \cdot Z_{k''}
$$
of $Z_k$.  For an integer $n$,
we denote by $Z_{k,\le n}$ the $\Q$-linear
subspace generated by the multiple zeta
values of weight $k$ and depth at most $n$.

Let $\bk = (k_1,\ldots,k_n)$ be an index of
depth $n$.
If $\bk$ is of weight $k$, then
it is known that $\zeta^\cF(\bk)$ and
$\zeta^{\natural,\cF}(\bk)$ belong to
$Z_{k,\le n-1}$.

\subsection{Proposition}

\begin{prop} \label{prop:main}
Let $\bk=(k_1,\ldots,k_n)$ be a
(not necessarily admissible) index of depth $n$.
Let $k=k_1+ \cdots + k_n$ denote the weight of $\bk$.
\begin{enumerate}
\item If $k$ and $n$ have a same parity,
then we have
$\zeta^{\natural,\cF}(\bk) \in Z_{k,+}$.
\item If $k$ and $n$ have different parities and
$k \neq 2$, then
the following congruence holds modulo
$Z_{k,+} + Z_{k,\le n-2}$:
\begin{equation} \label{eq1}
\begin{array}{rl}
\zeta^{\natural,\cF}(\bk) \equiv
& -(-1)^{k_1} 
{\displaystyle \sum_{\ell_2,\ldots,\ell_n \ge 0
\atop \ell_2 + \cdots + \ell_n = k_1}
\prod_{i=2}^n \binom{k_i+\ell_i-1}{\ell_i}
\cdot \zeta(k_2+\ell_2,\ldots,k_n+\ell_n) } \\
& + (-1)^{k_n} 
{\displaystyle 
\sum_{\ell_1,\ldots,\ell_{n-1} \ge 0
\atop \ell_1 + \cdots + \ell_{n-1} = k_n}
\prod_{i=1}^{n-1} \binom{k_i+\ell_i-1}{\ell_i}
\cdot \zeta(k_1+\ell_1,\ldots,k_{n-1}+\ell_{n-1})}.
\end{array}
\end{equation}
Here in the right hand side, multiple zeta values
for non-admissible indices may appear.
We regard such values as the constant terms of
the regularizations with respect to the
series expressions or the iterated integral
expressions.
Since no index of the form $(1,\ldots,1)$
appears in the right hand side,
the constant terms of the two kinds of 
regularizations are congruent
modulo $Z_{k,+}$.
\end{enumerate}
\end{prop}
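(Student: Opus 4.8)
The plan is to evaluate $\zeta^{\natural,\cF}(\bk)$ first modulo $Z_{k,+}$ by a sign decomposition of its defining lattice sum, and then to isolate the depth-$(n-1)$ contribution modulo $Z_{k,\le n-2}$ using the parity theorem. First I would split the sum defining $\zeta^{\natural,\cF}(\bk)$ according to the sign vector $(\mathrm{sgn}\, m_1,\ldots,\mathrm{sgn}\, m_n)$. Since $w((m_1,\ldots,m_n))$ is the local density of $\Tri_{n,\R}$ and every positive $1/m_i$ exceeds every negative one, a lattice point can carry nonzero weight only when its sign vector has the block shape $(+,\ldots,+,-,\ldots,-)$; every other pattern lies strictly outside the closure of $\Tri_{n,\R}$ and contributes $0$. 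Writing $j$ for the number of positive coordinates, the corresponding block factorizes, up to the sign $(-1)^{k_{j+1}+\cdots+k_n}$, into a product of two one-sided symmetric (series-regularized) sums of weights $k_1+\cdots+k_j$ and $k_{j+1}+\cdots+k_n$; for $0<j<n$ both weights are positive, so these middle blocks lie in $Z_{k,+}$ and may be dropped. Hence only $j=0$ and $j=n$ survive, and writing $\zeta^\natural_+$ for the one-sided ($m_i>0$) symmetric value I obtain
\[
\zeta^{\natural,\cF}(\bk)\equiv \zeta^\natural_+(k_1,\ldots,k_n)+(-1)^{k}\,\zeta^\natural_+(k_n,\ldots,k_1)\pmod{Z_{k,+}},
\]
the two terms coming from the all-positive and the all-negative blocks respectively.

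Next I would expand each $\zeta^\natural_+$ through the one-sided analogue of Lemma \ref{lem1}: its depth-$n$ constituent is the regularized value $\zeta^{\reg}$ of the index itself, its depth-$(n-1)$ constituents are the values obtained by merging one adjacent pair (with coefficient $\tfrac12$), and everything of depth $\le n-2$ may be discarded. The depth-$n$ part of the displayed expression is then the reversal combination $\zeta^{\reg}(k_1,\ldots,k_n)+(-1)^{k}\zeta^{\reg}(k_n,\ldots,k_1)$, which by the known depth bound must reduce modulo $Z_{k,+}$ to depth $<n$; the explicit such reduction is furnished by the parity theorem (\cite[Cor.\ 8]{IKZ}, \cite{T}) together with the stuffle relations. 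The engine of that reduction is Euler's partial-fraction identity, whose expansion coefficients are precisely the $\binom{k_i+\ell_i-1}{\ell_i}$ appearing in \eqref{eq1}; in the explicit form recorded as \cite[(8.6)]{IKZ} it produces exactly the two families indexed by distributing $k_1$ over $k_2,\ldots,k_n$ and $k_n$ over $k_1,\ldots,k_{n-1}$. Combining this depth-$(n-1)$ output with the merging terms of the two $\zeta^\natural_+$'s, and tracking the signs $(-1)^{k_1}$, $(-1)^{k_n}$ and the overall $(-1)^k$, should collapse the whole expression onto the right-hand side of \eqref{eq1}.

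Finally the case distinction is dictated by parity. The surviving multiple zeta values in \eqref{eq1} have depth $n-1$ and weight $k$, hence weight-plus-depth $k+n-1$. When $k$ and $n$ differ in parity this is even, the computation yields the congruence (2) directly, and the exclusion $k\neq2$ enters because $\zeta(2)$ is the unique even single zeta value not lying in $Z_{k,+}$ (this is the degenerate case $\bk=(2)$). When $k$ and $n$ share a parity, $k+n-1$ is odd, so each depth-$(n-1)$ value is itself reducible by the parity theorem to products and depth $\le n-2$; feeding this back, together with the exact cancellation already visible at $n=2$ (where $\zeta^{\natural,\cF}(k_1,k_2)=(1+(-1)^{k_2})\zeta(k_1)\zeta(k_2)$), upgrades the conclusion to the sharp statement $\zeta^{\natural,\cF}(\bk)\in Z_{k,+}$ of part (1), which I would establish by induction on the depth. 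The main obstacle I anticipate is analytic-combinatorial rather than conceptual: the individual blocks and one-sided sums diverge for non-admissible indices, so the factorization and the interchange with $\lim_{M\to\infty}$ must be justified through the series regularization (equivalently the series shuffle relations), and the resulting regularization ambiguity must be controlled. This is exactly where the observation that no index $(1,\ldots,1)$ occurs is needed: it guarantees that the series- and integral-regularized constant terms agree modulo $Z_{k,+}$, so that the binomial bookkeeping of \eqref{eq1} is unambiguous.
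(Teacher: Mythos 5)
Your reduction modulo $Z_{k,+}$ is correct and matches the paper's own starting point: the sign-block decomposition of the lattice sum is exactly the identity $\zeta^{\cF}(\bk)=\sum_{i=0}^{n}(-1)^{k_{i+1}+\cdots+k_n}\zeta^{\reg,*}(k_1,\ldots,k_i)\,\zeta^{\reg,*}(k_n,\ldots,k_{i+1})$ which the paper records as the definition (so the analytic issues you worry about are absorbed into the Kaneko--Zagier regularization once and for all), and combining it with Lemma \ref{lem1} gives $\zeta^{\natural,\cF}(\bk)\equiv\zeta^{\natural,\reg,*}(k_1,\ldots,k_n)+(-1)^{k}\zeta^{\natural,\reg,*}(k_n,\ldots,k_1)\pmod{Z_{k,+}}$, in agreement with the congruence $\zeta^{\natural,\cF}\equiv 2\zeta^{\natural,\reg,*}$ used in the paper's Step 6. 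Two remarks here. First, the efficient way to exploit this is the quasi-shuffle antipode: the $\natural$-elements are images of concatenation words under Hoffman's exponential, so the antipode carries the $\natural$-element of $(k_1,\ldots,k_n)$ to $(-1)^n$ times that of $(k_n,\ldots,k_1)$ with no contraction terms; this yields both the congruence above and part (1) at once. Your alternative plan for part (1) --- iterating the parity theorem and inducting on depth --- fails as stated: in case (1) the parity hypothesis holds for the depth-$(n-1)$ values but fails again at depth $n-2$ (there weight plus depth is even), so the iteration stalls at $Z_{k,+}+Z_{k,\le n-2}$ and never reaches the asserted sharp membership in $Z_{k,+}$.

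The genuine gap is at the core of part (2). After your reduction, everything rests on the claim that $\zeta^{\reg,*}(k_1,\ldots,k_n)+(-1)^{k}\zeta^{\reg,*}(k_n,\ldots,k_1)$ is congruent, modulo $Z_{k,+}+Z_{k,\le n-2}$, to the explicit right-hand side of \eqref{eq1}; you assert this is ``furnished by'' the parity theorem, the stuffle relations and \cite[(8.6)]{IKZ}, and that the signs ``should collapse'' correctly. That explicit congruence is precisely the content of the Proposition and is not available as a citation: the parity result (and, as the paper's introduction itself points out, the combination of \cite[Corollary 8]{IKZ} with \cite[(8.6)]{IKZ}) yields only the qualitative statement $\zeta^{\cF}(\bk)\in Z_{k,+}+Z_{k,\le n-1}$, with no control over which class in $Z_{k,\le n-1}/(Z_{k,+}+Z_{k,\le n-2})$ occurs. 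Producing that class is exactly what the paper's Section 5 does, and it is not a sign-tracking exercise: one needs the shuffle and stuffle regularized relations simultaneously (through the group-ring identity $1+\sh_{n,1}c_{n+1}=c_{n+1}(1+\sh_{n,1}\tau_n)$), the comparison $f^{*}_{\zeta,n}\equiv f^{\sha}_{\zeta,n}$ together with the duality-type functional equation $f^{\sha}_{\zeta,n}|_{P^{-1}w_0P}\equiv(-1)^{n-1}f^{\sha}_{\zeta,n}$, and the partial-fraction formula \eqref{eq:step2} for $f^{\natural}_{\zeta,n}-f^{*}_{\zeta,n}$ modulo $M_{n,\le n-2}$, all of which must then be pushed through $\sh_{n,1}$, $\iota(\tau_n)$ and $\eps_n$ (Steps 3--5) before the binomial coefficients of \eqref{eq1} finally appear in Step 6 as Taylor coefficients of the substitutions $x_i\mapsto x_i-x_1$ and $x_i\mapsto x_i-x_n$ in the depth-$(n-1)$ generating function. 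Without carrying out this derivation (or an equivalent one), your argument establishes membership in $Z_{k,+}+Z_{k,\le n-1}$ but not the formula \eqref{eq1}, which is what Theorem \ref{thm:main2} actually requires.
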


\begin{rmk}
\begin{enumerate}
\item We have stated Proposition \ref{prop:main}
for finite real multiple zeta values
$\zeta^{\natural,\cF}(\bk)$.
One can define finite motivic multiple zeta values
with $\natural$, and can obtain a result for them
similar to that in Proposition \ref{prop:main} 
for the finite real multiple zeta values
with $\natural$.
\item As we mentioned in Section $1$,
we can easily obtain from Proposition \ref{prop:main}
a description of $\zeta^{\cF}(\bk)$ modulo
$Z_{k,+} + Z_{k,\le n-2}$ (see Corollary \ref{cor:main} below).
\item When $k_{n-1}, k_n \ge 2$, the right hand side of
\eqref{eq1} is equal to the
coefficient of $x^{k_1 + k_n}$ of
$$
\sum_{0 < m_2 < \cdots < m_n}
\frac{x^{k_n}}{
(m_2 + x)^{k_2} \cdots (m_n+x)^{k_n}}
- 
\sum_{0 < m_1 < \cdots < m_{n-1}}
\frac{x^{k_1}}{
(m_1 + x)^{k_1} \cdots (m_{n-1}+x)^{k_{n-1}}}
$$
regarded as a formal power series in $x$.
\item We have described the result
modulo $Z_{k,+} + Z_{k,\le n-2}$.
However the same congruence as in
Proposition \ref{prop:main} holds modulo
\begin{equation} \label{eq2}
Z_{k,\le n-2} + 
\sum_{k',k'', n',n'' \ge 1
\atop k'+k'' = k, n' + n'' = n}
Z_{k',\le n'} \cdot Z_{k'', \le n''}.
\end{equation}
\end{enumerate}
\end{rmk}

Let $\Phi(A,B)$ be the Drinfel'd associator, which is
a non-commutative formal power series with coefficients in $\R$ 
in formal variables $A$, $B$. We slightly change the sign 
convention for $\Phi(A,B)$ in the original paper 
\cite[\S2]{Drinfeld} so that the formal power series $\Phi(A,B)$ relates 
the solutions of the Knizhnik-Zamolodchikov equation
$$
\frac{\partial G(z)}{\partial z} = 
\left(\frac{A}{z} + \frac{B}{1-z}\right)
G(z)
$$
in a neighborhood of $z=0$ and that of $z=1$.
Then for any index $\bk=(k_1,\ldots,k_n)$,
the coefficient of the monomial
$A^{k_n-1}
B A^{k_{n-1}-1} \cdots
B A^{k_1 -1}B$
in $\Phi(A,B)$ is equal to the constant term of
the multiple zeta value of index $\bk$
regularized with respect to
the iterated integral expression.
For a word $w$ over $\{A,B\}$, we denote by $Z(w)$
the coefficient of $w$ in $\Phi(A,B)$.
Under this notation, 
Proposition \ref{prop:main} can be restated as follows:
\begin{cor}\label{cor:main}
Let $\bk=(k_1,\ldots,k_n)$ be a
(not necessarily admissible) index of depth $n$.
Let $k=k_1+ \cdots + k_n$ denote the weight of $\bk$.
\begin{enumerate}
\item If $k$ and $n$ have a same parity,
then we have
$$
\zeta^{\cF}(\bk) 
\equiv - \sum_{i=1}^{n-1}
\zeta(k_1,\ldots,k_{i-1},k_i+k_{i+1},k_{i+1},\ldots, k_n)
$$
modulo $Z_{k,+} + Z_{k,\le n-2}$.
\item If $k$ and $n$ have different parities and
$k \neq 2$, then we have
$$
\zeta^{\cF}(\bk) \equiv
- Z(w) - (-1)^{k} Z(w^*)
$$
modulo $Z_{k,+} + Z_{k,\le n-2}$.
Here
$$
w=A^{k_n-1}
B A^{k_{n-1}-1} \cdots
B A^{k_2-1}
B A^{k_1}
$$
and
$$
w^*=A^{k_1-1}
B A^{k_2 -1} \cdots
B A^{k_{n-1}-1}
B A^{k_n}.
$$
\end{enumerate}
\qed
\end{cor}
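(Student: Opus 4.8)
The plan is to deduce Corollary~\ref{cor:main} from Proposition~\ref{prop:main} in two stages: first pass from $\zeta^{\natural,\cF}$ to $\zeta^\cF$ by means of Lemma~\ref{lem1}, and then rewrite the binomial sums of \eqref{eq1} as the associator coefficients $Z(w)$ and $Z(w^*)$. For the first stage I would invert the relation of Lemma~\ref{lem1}. That relation is unipotent and strictly triangular in the depth: its diagonal term ($m=n$) is $\zeta^\cF(\bk)$, and the only further terms surviving modulo $Z_{k,\le n-2}$ are those of depth $n-1$, arising from the surjections $\phi$ whose unique non-singleton fibre is $\{i,i+1\}$. Writing $\bk^{(i)}=(k_1,\ldots,k_{i-1},k_i+k_{i+1},k_{i+2},\ldots,k_n)$ for the corresponding merged index, and using that $\zeta^\cF$ and $\zeta^{\natural,\cF}$ of a depth-$(n-1)$ index agree modulo $Z_{k,\le n-2}$, this yields
\[
\zeta^\cF(\bk)\equiv \zeta^{\natural,\cF}(\bk)-\tfrac12\sum_{i=1}^{n-1}\zeta^{\natural,\cF}(\bk^{(i)})\pmod{Z_{k,\le n-2}}.
\]

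In the different-parity case ($k\not\equiv n$) each merged index $\bk^{(i)}$ has depth $n-1$ with $k\equiv n-1\pmod 2$, so Proposition~\ref{prop:main}(1) places $\zeta^{\natural,\cF}(\bk^{(i)})$ in $Z_{k,+}$; the merged terms therefore disappear and $\zeta^\cF(\bk)\equiv\zeta^{\natural,\cF}(\bk)$, which by Proposition~\ref{prop:main}(2) equals the right-hand side of \eqref{eq1} modulo $Z_{k,+}+Z_{k,\le n-2}$. It then remains to translate \eqref{eq1} into $-Z(w)-(-1)^kZ(w^*)$. Via the generating-series reformulation noted in item (3) of the Remark following Proposition~\ref{prop:main}, the first binomial sum in \eqref{eq1} is the coefficient of $x^{k_1+k_n}$ in $\sum_{0<m_2<\cdots<m_n}x^{k_n}\prod_{i=2}^n(m_i+x)^{-k_i}$; expanding each factor $(m_i+x)^{-k_i}$ shows this coefficient to be exactly the shuffle-regularized value attached to the non-admissible word $w=A^{k_n-1}B\cdots BA^{k_2-1}BA^{k_1}$, namely $Z(w)$. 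The second sum is the analogous coefficient for the reversed index $\overleftarrow{\bk}=(k_n,\ldots,k_1)$, and since $w^*=A^{k_1-1}B\cdots BA^{k_{n-1}-1}BA^{k_n}$ is precisely the word attached to $\overleftarrow{\bk}$, it produces $Z(w^*)$; the prefactor $(-1)^k$ is supplied by the reversal symmetry of $\Phi$ under $t\mapsto 1-t$.

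In the equal-parity case ($k\equiv n$), Proposition~\ref{prop:main}(1) gives $\zeta^{\natural,\cF}(\bk)\in Z_{k,+}$, whence $\zeta^\cF(\bk)\equiv-\tfrac12\sum_i\zeta^{\natural,\cF}(\bk^{(i)})$. Here I would use instead the leading-depth expansion obtained by splitting the defining sum of $\zeta^{\natural,\cF}(\bk^{(i)})$ according to the signs of the $m_j$: the all-positive and all-negative chambers contribute $\zeta(\bk^{(i)})$ and $(-1)^k\zeta(\overleftarrow{\bk^{(i)}})$ respectively, while the mixed chambers contribute products, so $\zeta^{\natural,\cF}(\bk^{(i)})\equiv\zeta(\bk^{(i)})+(-1)^k\zeta(\overleftarrow{\bk^{(i)}})$ modulo $Z_{k,+}+Z_{k,\le n-2}$. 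Each $\bk^{(i)}$ has weight $k$ and depth $n-1$ with $k+(n-1)$ odd, so the parity result (\cite[Corollary 8]{IKZ}, \cite{T}) gives $\zeta(\overleftarrow{\bk^{(i)}})\equiv(-1)^k\zeta(\bk^{(i)})$; substituting yields $\zeta^{\natural,\cF}(\bk^{(i)})\equiv 2\zeta(\bk^{(i)})$ and hence $\zeta^\cF(\bk)\equiv-\sum_i\zeta(\bk^{(i)})$, as claimed (when $n=2$ the reversal is trivial, so the depth-one $\zeta(2)$-exception causes no trouble).

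The main obstacle is the translation in the different-parity case: identifying $Z(w)$ and $Z(w^*)$ with the two binomial sums and, above all, controlling every sign. The two delicate points are the shuffle-regularization/shift identity expressing the associator coefficient of a word ending in a block $A^{k_1}$ as a binomial-weighted sum of honest multiple zeta values, and the precise $(-1)^k$ produced by the $t\mapsto 1-t$ duality relating $w$ to $w^*$. By contrast, the equal-parity case reduces, after the sign-chamber expansion, to a clean application of the parity result, which I expect to be comparatively routine.
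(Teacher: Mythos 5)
Your overall route is the intended one: the paper offers no argument at all for this corollary (it is presented as a direct restatement of Proposition \ref{prop:main}), and the correct way to fill that in is exactly your two stages --- invert Lemma \ref{lem1} to trade $\zeta^{\cF}$ for $\zeta^{\natural,\cF}$ up to the merged indices $\bk^{(i)}$, then feed in Proposition \ref{prop:main}. Your stage 1 is correct, your equal-parity case is correct (including the observation that for $n=2$ the reversal is trivial so the $\zeta(2)$-exception to the parity theorem is harmless), and your identification of the first binomial sum of \eqref{eq1} with $-Z(w)$ is correct: the trailing-$A$ regularization identity $Z(uA^m)=(-1)^m\sum_{\ell_2+\cdots+\ell_n=m}\prod_i\binom{k_i+\ell_i-1}{\ell_i}\zeta^{\reg,\sha}(k_2+\ell_2,\ldots,k_n+\ell_n)$ (for $u$ the word of $(k_2,\ldots,k_n)$) follows from the shuffle relations together with $Z(A)=0$, and it matches the first sum on the nose.

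The genuine gap is in your treatment of $Z(w^*)$. Stripping the trailing block $A^{k_n}$ from $w^*$ leaves the word $A^{k_1-1}B\cdots BA^{k_{n-1}-1}B$, which in the paper's convention is the word of the \emph{reversed} index $(k_{n-1},\ldots,k_1)$; so the same trailing-$A$ identity gives
$$
Z(w^*)=(-1)^{k_n}\sum_{\ell_1+\cdots+\ell_{n-1}=k_n}\prod_{i=1}^{n-1}\binom{k_i+\ell_i-1}{\ell_i}\,\zeta^{\reg,\sha}(k_{n-1}+\ell_{n-1},\ldots,k_1+\ell_1),
$$
whereas the second sum in \eqref{eq1} carries the \emph{unreversed} entries $\zeta(k_1+\ell_1,\ldots,k_{n-1}+\ell_{n-1})$. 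Bridging the two requires the index-reversal congruence $\zeta(j_d,\ldots,j_1)\equiv(-1)^{d+1}\zeta(j_1,\ldots,j_d)$ modulo $Z_{k,+}+Z_{k,\le d-1}$ (here $d=n-1$, so the sign is $(-1)^n=-(-1)^k$ since $k\not\equiv n$, which is precisely where the prefactor $-(-1)^k$ comes from). This congruence is the quasi-shuffle antipode relation --- apply $\Q$-linearity of $\zeta^{\reg,*}$ to $\sum\zeta^{\reg,*}(w_{(1)})\,\zeta^{\reg,*}(S(w_{(2)}))=0$ with deconcatenation coproduct --- i.e.\ the relation \cite[(8.6)]{IKZ} invoked in the introduction; it is \emph{not} the ``reversal symmetry of $\Phi$ under $t\mapsto 1-t$'' that you appeal to. That symmetry is duality: it reverses the word \emph{and} exchanges $A\leftrightarrow B$, carries no sign, and does not relate $w$ to $w^*$ (these two words are not dual to one another). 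More fundamentally, no exact symmetry of $\Phi$ can do this job, because the relation you need holds only modulo $Z_{k,+}+Z_{k,\le n-2}$, not as an identity of associator coefficients. So the step supplying the $(-1)^k$ must be replaced by the stuffle-antipode (parity-type) congruence; with that substitution your argument goes through.
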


\section{Preliminaries for a proof of Proposition \ref{prop:main}}
In this section we give some preliminaries
for the proof of Proposition \ref{prop:main},
which we will give in the next section.

\subsection{Permutation matrices}

For each element $\sigma \in \frS_n$, we denote
by $w_\sigma \in \GL_n(\Z)$ the $n \times n$ matrix
whose $(i,j)$-coordinate is equal to
$\delta_{i,\sigma(j)}$.
For $\sigma, \tau \in \frS_n$, we denote by
$\sigma \tau$ the composite
$\sigma \circ \tau$ of permutations.
We then have 
$w_{\sigma \tau} = w_\sigma w_\tau$.

\subsection{The action of $\GL_n(\Z)$ on formal power series}

Let $R$ be a commutative ring.
For a formal power series 
$f(x_1,\ldots,x_n) \in R[[x_1,\ldots,x_n]]$
in $n$ variables and for $\gamma \in \GL_n(\Z)$,
we denote by $f|_\gamma \in R[[x_1,\ldots, x_n]]$
the formal power series
$$
f|_\gamma (x_1,\ldots,x_n)  = 
f((x_1,\ldots,x_n) \gamma^{-1}).
$$
When $\gamma =w_\sigma$, we denote $f|_{w_{\sigma}}$ by
$f|_\sigma$. By definition, we have
$$
f|_{\sigma}(x_1,\ldots,x_n) =
f(x_{\sigma^{-1}(1)},\ldots,x_{\sigma^{-1}(n)}).
$$

For an element $x = \sum_\gamma a_\gamma \gamma$
in the group ring $\Z[\GL_n(\Z)]$, 
we write $f|_x = \sum_\gamma a_\gamma f|_\gamma$.
For an element
$x = \sum_\sigma a_\sigma \sigma$
in the group ring $\Z[\frS_n]$, 
we write $f|_x = \sum_\sigma a_\sigma f|_\sigma$.
For $x,y \in \Z[\GL_n(\Z)]$ 
(\resp $x,y \in \Z[\frS_n]$), we have
$f|_{xy} = (f|_x)|_y$.

\subsection{A generation function for the multiple zeta values of depth $n$}

For each integer $n \ge 0$, 
let us introduce the following
formal power series
$f^\natural_{\zeta,n}(x_1,\ldots,x_n) \in
\R[[x_1,\ldots,x_n]]$ with
coefficients in $\R$:
$$
f^\natural_{\zeta,n}(x_1,\ldots,x_n)
= \sum_{k_1,\ldots,k_n \ge 1}
\zeta^{\natural,\reg,*}(k_1,\ldots,k_n)
x_1^{k_1-1} \cdots x_n^{k_n-1}.
$$
Here 
$$
\zeta^{\natural,\reg,*}(\bk) =
\sum_{1 \le m \le n} \sum_{\phi \in \Surj(n,m)} 
\frac{1}{\sharp G_\phi}
\zeta^{\reg,*}(\phi_* \bk).
$$
where the notation is as in Lemma \ref{lem1}
and the superscript ``$\reg,*$" of $\zeta^{\reg,*}(\phi_* \bk)$ 
stands for the constant term of the regularization with
respect to the series expression.
In a similar manner, 
we construct the formal power series 
$f^*_{\zeta,n}, f^\sha_{\zeta,n}
\in \R[[x_1,\ldots,x_n]]$ as follows:
$$
f^*_{\zeta,n}(x_1,\ldots,x_n)
= \sum_{k_1,\ldots,k_n \ge 1}
\zeta^{\reg,*}(k_1,\ldots,k_n)
x_1^{k_1-1} \cdots x_n^{k_n-1},
$$
$$
f^\sha_{\zeta,n}(x_1,\ldots,x_n)
= \sum_{k_1,\ldots,k_n \ge 1}
\zeta^{\reg,\sha}(k_1,\ldots,k_n)
x_1^{k_1-1} \cdots x_n^{k_n-1}.
$$
Here the superscript $\reg,\sha$ stands for
the constant term of the regularization with
respect to the iterated integral expression.
When $n=0$, we understand
$f^\natural_{\zeta,0} = f^*_{\zeta,0}
= f^\sha_{\zeta,0} =1$.

\subsection{Notation for shuffles}

For each integer $i$ with $0 \le i \le n$,
let $\Sh_{n,i}$ denote the following 
subset of $\frS_n$:
$$
\Sh_{n,i} = \{ \sigma \in \frS_n\ |\ 
\sigma(1) < \cdots < \sigma(i),
\sigma(i+1) < \cdots < \sigma(n)
\}.
$$
We then have (cf.\ \cite[Theorem 2.5]{Hoffman3})
$$
f^\natural_{\zeta,i}(x_1,\ldots,x_i) 
f^\natural_{\zeta,n-i}(x_{i+1},\ldots,x_n)
=\sum_{\sigma \in \Sh_{n,i}}
f^\natural_{\zeta,n}(x_1,\ldots,x_n)|_\sigma.
$$
We set
$$
\sh_{n,i} = \sum_{\sigma \in \Sh_{n,i}} \sigma.
$$

\subsection{A vector space for the generating functions}

We denote by $Z_k$ the $\Q$-linear subspace of $\R$
generated by the multiple zeta values of weight $k$.
The formal power series $f^\natural_{\zeta,n}$,
$f^*_{\zeta,n}$, $f^\sha_{\zeta,n}$ belong to
the $\Q$-linear subspace
$$
M_n = \left\{ \left.
\sum_{k_1,\ldots,k_n \ge 1} 
a_{k_1,\ldots,k_n } x_1^{k_1-1} \cdots x_n^{k_n-1} \ \right|\ 
a_{k_1,\ldots,k_n} \in Z_{k_1+\cdots + k_n}, 
\forall k_1,\ldots,k_n \ge 1 \right\}
$$
of $\R[[x_1,\ldots, x_n]]$.

\subsection{Some special elements in $\GL_n(\Z)$}

We regard $\frS_n$ as a subgroup of $\frS_{n+1}$ 
in natural way.
Let us introduce the following
matrices $P_n, w_{n,0}, \eps_n \in \GL_n(\Z)$:
$$
P_n = \begin{pmatrix}
1 & 1 & \cdots & 1 \\
0 & 1 & \cdots & 1 \\
\vdots & \ddots & \ddots & \vdots \\
0 & \cdots & 0 & 1
\end{pmatrix},\ \ 
w_{n,0} = \begin{pmatrix}
0 & \cdots & 0 & 1 \\
\vdots & \iddots & \iddots & 0 \\
0 & 1 & \iddots & \vdots \\
1 & 0 & \cdots & 0
\end{pmatrix},\ \ 
\eps_n = - 1_n.
$$

\begin{lem}
Let notation be as above. Then there exists
a unique injective homomorphism
$\iota_n : \frS_{n+1} \inj \GL_n(\Z)$
which satisfies the following properties:
\begin{itemize}
\item If $\sigma \in \frS_n$, 
then $\iota_n(\sigma) = w_\sigma$ holds,
\item The image under $\iota_n$ of the permutation 
$$
\sigma' = \begin{pmatrix} 1 & 2 & \cdots & n-1 & n & n+1 \\
n-1 & n-2 & \cdots & 1 & n+1 & n \end{pmatrix}
$$
is equal to $\eps_n P_n^{-1} w_{n,0} P_n$.
\end{itemize}
\end{lem}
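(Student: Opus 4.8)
The plan is to realize $\iota_n$ concretely as the action of the Weyl group $\frS_{n+1}$ on the type $A_n$ root lattice, the point being to choose a $\Z$-basis in which the subgroup $\frS_n$ acts by the permutation matrices $w_\sigma$. I would let $\frS_{n+1}$ act on $\Z^{n+1} = \bigoplus_{i=1}^{n+1}\Z e_i$ by $\sigma\cdot e_i = e_{\sigma(i)}$ (so that $w_\sigma w_\tau = w_{\sigma\tau}$), and let $Q = \{(v_1,\ldots,v_{n+1}) \mid \sum_i v_i = 0\}$ be the rank-$n$ sublattice preserved by this action. Taking the $\Z$-basis $\beta_i = e_i - e_{n+1}$ ($1 \le i \le n$) of $Q$, I would define $\iota_n(\sigma) \in \GL_n(\Z)$ to be the matrix of $\sigma|_Q$ in this basis; this is manifestly a group homomorphism. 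Injectivity is immediate: if $\sigma\cdot\beta_i=\beta_i$ for all $i$, then $e_{\sigma(i)}-e_{\sigma(n+1)}=e_i-e_{n+1}$ forces $\sigma(i)=i$ and $\sigma(n+1)=n+1$, so $\sigma=\mathrm{id}$.

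For the first property, when $\sigma \in \frS_n$ fixes $n+1$ we get $\sigma\cdot\beta_i = e_{\sigma(i)} - e_{n+1} = \beta_{\sigma(i)}$, so the $j$-th column of $\iota_n(\sigma)$ is $e_{\sigma(j)}$ and $\iota_n(\sigma) = w_\sigma$ as required. For the second property I would compute both sides as linear endomorphisms of $\Z^n$ in the standard basis. On the one hand, since $\sigma'(i) = n-i$ for $i\le n-1$ and $\sigma'(n)=n+1$, $\sigma'(n+1)=n$, a direct calculation gives $\iota_n(\sigma')\beta_j = \beta_{n-j} - \beta_n$ for $1\le j\le n-1$ and $\iota_n(\sigma')\beta_n = -\beta_n$. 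On the other hand, reading off that $P_n$ is the partial-summation matrix $P_n e_j = e_1 + \cdots + e_j$, that $P_n^{-1} e_j = e_j - e_{j-1}$, and that $w_{n,0}$ is the reversal $e_i \mapsto e_{n+1-i}$, a short telescoping computation shows $\eps_n P_n^{-1} w_{n,0} P_n$ sends $e_j \mapsto e_{n-j} - e_n$ (with the convention $e_0 := 0$). These two matrices agree, so $\iota_n(\sigma') = \eps_n P_n^{-1} w_{n,0} P_n$.

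Finally, for uniqueness it suffices to observe that $\frS_n$ together with $\sigma'$ generates $\frS_{n+1}$: writing $\rho$ for the reversal of $\{1,\ldots,n-1\}$, which lies in $\frS_{n-1}\subset\frS_n$, one checks $\sigma' = \rho\, s_n$ with $s_n = (n\ n+1)$, so $s_n = \rho\sigma' \in \langle\frS_n,\sigma'\rangle$; since $s_1,\ldots,s_{n-1}\in\frS_n$ and $s_1,\ldots,s_n$ generate $\frS_{n+1}$, the two stated conditions determine $\iota_n$ on a generating set and hence everywhere. (Alternatively, $\frS_n$ is a maximal subgroup of $\frS_{n+1}$ and $\sigma'\notin\frS_n$.) I expect the only real obstacle to be bookkeeping rather than substance: one must pin down the lattice and basis so that the two normalizations---permutation matrices for $\frS_n$ and the specific conjugate $\eps_n P_n^{-1}w_{n,0}P_n$ for $\sigma'$---hold simultaneously. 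Once the basis $\beta_i = e_i - e_{n+1}$ is chosen, both verifications reduce to the short computations above.
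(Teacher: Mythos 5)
Your proof is correct and follows essentially the same route as the paper: the paper defines $\iota_n$ via the embedding $\Z^{\oplus n}\inj\Z^{\oplus n+1}$, ${}^t(a_1,\ldots,a_n)\mapsto{}^t(a_1,\ldots,a_n,-(a_1+\cdots+a_n))$, whose image is exactly your zero-sum lattice $Q$ with basis $\beta_i=e_i-e_{n+1}$, and then checks the action of $\sigma'$ and invokes generation of $\frS_{n+1}$ by $\frS_n$ and $\sigma'$, just as you do. Your write-up merely supplies more explicit detail (the telescoping computation for $\eps_n P_n^{-1}w_{n,0}P_n$, the factorization $\sigma'=\rho\,s_n$, and the injectivity check) where the paper says ``one can check.''
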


\begin{proof}
Let $\Z^{\oplus n}$ denote the abelian group of
$n$-dimensional column vectors with coordinates in $\Z$.
By the multiplication from the left,
the group $\GL_n(\Z)$ acts from the left on $\Z^{\oplus n}$.
This action give an isomorphism
$\GL_n(\Z) \to \Aut(\Z^{\oplus n})$ of groups.
Via this isomorphism we identify
$\GL_n(\Z)$ with $\Aut(\Z^{\oplus n})$.

Let us consider the embedding
$\Z^{\oplus n} \inj \Z^{\oplus n+1}$
which sends
${}^t (a_1,\ldots,a_n) \in \Z^{\oplus n}$ 
to 
$$
{}^t (a_1,\ldots,a_n,-(a_1+\cdots+a_n)) 
\in \Z^{\oplus n+1}.
$$
The image of this embedding is
stable under the action of the group
$\frS_{n+1}$.
Via this embedding, the group $\frS_{n+1}$ acts
on $\Z^{\oplus n}$.
Let us denote by $\iota_0$ the homomorphism
$\frS_{n+1} \to \Aut(\Z^{\oplus n}) \cong \GL_n(\Z)$
supplied by this action.
It is clear from the definition that we have
$\iota_n(\sigma) = w_\sigma$ for any
$\sigma \in \frS_n$.
Let $e_1,\ldots,e_n \in \Z^{\oplus n}$ be the
standard basis of $\Z^{\oplus n}$.
%
The image $\iota_n(\sigma') \in \Aut(\Z^{\oplus n})$ 
of $\sigma'$ under $\iota_n$ sends 
$e_i$ to $e_{n-i} -e_n$ for
$1 \le i \le n-1$, and sends $e_n$ to $-e_n$.
Hence it is equal to the matrix
$$
\begin{pmatrix}
0 & \cdots & 0 & 1 & 0 \\
\vdots & \iddots & \iddots & \iddots & \vdots \\
0 & \iddots & \iddots & \vdots & \vdots \\
1 & 0 & \cdots & \cdots & 0 \\
-1 & \cdots & \cdots & \cdots & -1
\end{pmatrix}.
$$
One can check that this matrix is equal to
$\eps_n P_n^{-1} w_{n,0} P_n$,
which proves that the homomorphism $\iota_n$
has the desired property.
The uniqueness of $\iota_n$ follows since
the group $\frS_{n+1}$ is generated by $\frS_n$
and the permutation $\sigma'$.
\end{proof}

In the sequel, we often omit the subscript $n$
of $P_n, w_{n,0}, \iota_n$. They are abbreviated by
$P$, $w_0$, $\iota$, respectively.

\subsection{An identity of Ihara-Kaneko-Zagier}

Let $\tau_n \in \frS_{n+1}$ denote the
transposition $\tau_n = (1,n+1)$.
As is mentioned in \cite{IKZ}, the following
equality can be checked easily:
\begin{lem}
The equality
$$
1 + \sh_{n,1} c_{n+1}
= c_{n+1} (1 + \sh_{n,1} \tau_n)
$$
holds in $\Z[\frS_{n+1}]$. Here $c_{n+1}$ denotes the cyclic
permutation
$$
c_{n+1} = \begin{pmatrix}
1 & 2 & \cdots & n-1 & n & n+1 \\
2 & 3 & \cdots & n & n+1 & 1
\end{pmatrix}
$$
of order $n+1$.
\qed
\end{lem}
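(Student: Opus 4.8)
The plan is to prove the identity by expanding both sides explicitly in $\Z[\frS_{n+1}]$ as sums of permutations and checking that the two resulting collections of group elements coincide. First I would record the explicit description of the shuffle set: since $\Sh_{n,1}$ consists of those $\sigma \in \frS_n$ with $\sigma(2) < \cdots < \sigma(n)$, each such $\sigma$ is determined by its value $j = \sigma(1) \in \{1,\ldots,n\}$. Writing $\sigma_j$ for this permutation, one has $\sigma_j(i) = i-1$ for $2 \le i \le j$, $\sigma_j(i) = i$ for $j < i \le n$, and $\sigma_j(n+1) = n+1$; thus $\sh_{n,1} = \sum_{j=1}^n \sigma_j$.

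The key step is then a direct computation of the two families of products. Composing with the full cycle $c_{n+1} = (1\,2\,\cdots\,(n+1))$ on the right, I expect to find that $\sigma_j c_{n+1}$ is exactly the ascending ``tail cycle'' $\gamma_j := (j, j+1, \ldots, n+1)$, which fixes $\{1,\ldots,j-1\}$. Likewise, inserting $\tau_n = (1, n+1)$ and multiplying by $c_{n+1}$ on the left, the product $c_{n+1}\sigma_j\tau_n$ should collapse to the shorter tail cycle $\gamma_{j+1} = (j+1, \ldots, n+1)$. Both computations are merely a matter of tracing where each of $1, \ldots, n+1$ is sent, and it is the cyclic cancellation that forces the output to be a single cycle rather than a generic permutation.

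Granting these two identities, the conclusion is immediate bookkeeping. The left-hand side becomes
$$
1 + \sh_{n,1}c_{n+1} = 1 + \sum_{j=1}^n \gamma_j = \sum_{j=1}^{n+1}\gamma_j,
$$
since the identity $1$ is precisely the degenerate tail cycle $\gamma_{n+1}$. The right-hand side becomes
$$
c_{n+1}\bigl(1 + \sh_{n,1}\tau_n\bigr) = c_{n+1} + \sum_{j=1}^n \gamma_{j+1} = \gamma_1 + \sum_{j=2}^{n+1}\gamma_j = \sum_{j=1}^{n+1}\gamma_j,
$$
using that the full cycle $c_{n+1}$ is itself $\gamma_1$. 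Hence both sides equal $\sum_{j=1}^{n+1}\gamma_j$, which proves the identity.

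The computation has no real difficulty; the only point requiring care is the treatment of the two boundary terms. One must notice that the stray summand $1$ on the left is absorbed as the $j = n+1$ tail cycle, while on the right the stray factor $c_{n+1}$ is absorbed as the $j = 1$ tail cycle, so that after the index shift $j \mapsto j+1$ the two sums range over exactly the same set $\{\gamma_1, \ldots, \gamma_{n+1}\}$. This shift-and-boundary matching is the heart of the identity and is what the phrase ``can be checked easily'' refers to.
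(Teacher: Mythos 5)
Your proposal is correct: both product computations ($\sigma_j c_{n+1} = \gamma_j$ and $c_{n+1}\sigma_j\tau_n = \gamma_{j+1}$, under the paper's convention $\sigma\tau = \sigma\circ\tau$) check out, and the boundary matching $1 = \gamma_{n+1}$, $c_{n+1} = \gamma_1$ makes both sides equal $\sum_{j=1}^{n+1}\gamma_j$. The paper offers no proof at all (the lemma is stated as "can be checked easily" with an immediate \qed), so your explicit verification is precisely the omitted direct check and there is no competing approach to compare against.
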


\section{A proof of Proposition \ref{prop:main}}

In this section, we give a proof of
Proposition \ref{prop:main}.
The claim (1) is easy to prove and is left to the reader.
The rest of this section is devoted to the proof of
the claim (2).

Since
$$
\iota(c_{n+1}) = \eps_n w_0 P^{-1} w_0 P,
$$
we have
$$
f^\natural_{\zeta,n}|_{1+\sh_{n,1} \eps_n w_0 P^{-1} w_0 P}
= f^\natural_{\zeta,n}|_{\eps_n w_0 P^{-1} w_0 P(1+\sh_{n,1} \iota(\tau_n))}.
$$

\subsection{Some subspaces of $M_n$}
For an integer $k \ge 0$,
we denote by $Z_{k,+}$ the subspace
$\sum_{i=1}^{k-1} Z_{i} \cdot Z_{k-i}$ を
of $Z_k$.
The subspace
$$
\left\{\left.\sum_{k_1,\ldots,k_n \ge 1} 
a_{k_1,\ldots,k_n } x_1^{k_1-1} \cdots x_n^{k_n-1} \ \right|\ 
a_{k_1,\ldots,k_n} \in Z_{k_1+\cdots + k_n,+}, 
\forall k_1,\ldots,k_n \ge 1 \right\}
$$
of $M_n$ is denoted by $M_{n,+}$.
We denote by $Z_{k,\le d}$
the $\Q$-linear subspace of $Z_k$ spanned
by the multiple zeta values of weight $k$
and of depth at most $d$
The subspace
$$
\sum_{k_1,\ldots,k_n \ge 1} 
a_{k_1,\ldots,k_n } x_1^{k_1-1} \cdots x_n^{k_n-1} \ |\ 
a_{k_1,\ldots,k_n} \in Z_{k_1+\cdots + k_n, \le d}, 
\forall k_1,\ldots,k_n \ge 1 \}
$$
of $M_n$ is denoted by $M_{n, \le d}$.

\subsection{Step 1}
Since
$f^\natural_{\zeta,n}|_{\sh_{n,1}} \equiv 0 \mod{M_{n,+}}$,
we have
$$
f^\natural_{\zeta,n}|_{1+\sh_{n,1} \eps_n w_0 P^{-1} w_0 P}
\equiv f^\natural_{\zeta,n} \mod{M_{n,+}}.
$$
We can check that $f^\natural_{\zeta,n}|_{1 + (-1)^n w_0} 
\equiv 0 \mod{M_{n,+}}$.
Hence we have
\begin{align*}
& f^\natural_{\zeta,n}|_{\eps_n w_0 P^{-1} w_0 P(1+\sh_{n,1} \iota(\tau_n))} 
\equiv (-1)^{n-1}
f^\natural_{\zeta,n}|_{P^{-1} w_0 P(1+\sh_{n,1} \iota(\tau_n))\eps_n} \\
\equiv & (-1)^{n-1}
(f^\natural_{\zeta,n}
-f^*_{\zeta,n})|_{P^{-1} w_0 P(1+\sh_{n,1} \iota(\tau_n))\eps_n} 
+ (-1)^{n-1}
f^*_{\zeta,n}|_{P^{-1} w_0 P(1+\sh_{n,1} \iota(\tau_n))\eps_n}
\end{align*}
modulo $M_{n,+}$. 
Since $f^*_{\zeta,n} \equiv f^\sha_{\zeta,n}
\mod{M_{n,+} + \Q \zeta(n)}$
(here $\zeta(n)$ is regarded as a constant 
formal power series)
and $f^\sha_{\zeta,n}|_{P^{-1} w_0 P} 
\equiv (-1)^{n-1} f^\sha_{\zeta,n}
\mod{M_{n,+}}$, we have
\begin{align*}
& f^\natural_{\zeta,n}|_{\eps_n w_0 P^{-1} w_0 P(1+\sh_{n,1} \iota(\tau_n))} \\
\equiv & (-1)^{n-1}
(f^\natural_{\zeta,n}
-f^*_{\zeta,n})|_{P^{-1} w_0 P(1+\sh_{n,1} \iota(\tau_n)) \eps_n} + 
f^*_{\zeta,n}|_{(1+\sh_{n,1} \iota(\tau_n)) \eps_n} \\
\equiv & (-1)^{n-1}
(f^\natural_{\zeta,n}-f^*_{\zeta,n})|_{P^{-1} w_0 P(1+\sh_{n,1} \iota(\tau_n))\eps_n} \\
& + 
(f^*_{\zeta,n}-f^\natural_{\zeta,n})|_{(1+\sh_{n,1} \iota(\tau_n))\eps_n} 
+ f^\natural_{\zeta,n}|_{(1+\sh_{n,1} \iota(\tau_n))\eps_n} \\
\equiv & f^\natural_{\zeta,n}|_{\eps_n} 
+ (f^\natural_{\zeta,n}-f^*_{\zeta,n})|_{
((-1)^{n-1} P^{-1} w_0 P-1)(1+\sh_{n,1} \iota(\tau_n))\eps_n}
\end{align*}
modulo $M_{n,+} + \Q \zeta(n)$.
\footnote{We can ignore the index $\bk=(1,\ldots,1)$ in
the discussion here since the weight and the depth have
a same parity for this index.}
Hence we have
\begin{equation}\label{eq:step1}
f^\natural_{\zeta,n}|_{\eps_n}
\equiv
f^\natural_{\zeta,n}
+ (f^\natural_{\zeta,n}-f^*_{\zeta,n})|_{
((-1)^{n-1} P^{-1} w_0 P-1)
(1+\sh_{n,1} \iota(\tau_n))}
\mod{M_{n,+}+\Q \zeta(n)}.
\end{equation}

\subsection{Step 2}
Observe that we have
$$
f^\natural_{\zeta,n}-f^*_{\zeta,n}
\equiv \frac{1}{2} \sum_{i=1}^{n-1}
\frac{1}{x_i-x_{i+1}}
\left(
\begin{array}{l}
f^\natural_{\zeta,n-1}(x_1,\ldots,x_i,
\wh{x_{i+1}},\ldots,x_n) \\
- f^\natural_{\zeta,n-1}(x_1,
\ldots, \wh{x_i},x_{i+1},\ldots,x_n)
\end{array}
\right)
$$
modulo $M_{n,\le n-2}$.
The formal power series
$$
\left. \left(
\frac{f^\natural_{\zeta,n-1}(x_1,\ldots,x_i,
\wh{x_{i+1}},\ldots,x_n)
- f^\natural_{\zeta,n-1}(x_1,
\ldots, \wh{x_i},x_{i+1},\ldots,x_n)}{
x_i -x_{i+1}}
\right)\right|_{P^{-1} w_0 P}
$$
is equal to 
$$
\frac{1}{x_{n-(i+1)} -x_{n-i}}
\left(
\begin{array}{l}
f^\natural_{\zeta,n-1}
(x_n-x_{n-1},\ldots,
\wh{x_n - x_{n-(i+1)}},\ldots,
x_n-x_1,x_n) \\
- f^\natural_{\zeta,n-1}(
x_n-x_{n-1}, 
\ldots, \wh{x_n - x_{n-i}},
\ldots, x_n - x_1,x_n)
\end{array}
\right)
$$
when $i \le n-2$,
and is equal to 
$$
\frac{f^\natural_{\zeta,n-1}
(x_n-x_{n-1},\ldots,x_n -x_{n-i},
\ldots, x_n-x_1)
- f^\natural_{\zeta,n-1}(
x_n-x_{n-1}, 
\ldots, x_n - x_{2},x_n)}{
-x_{1}}
$$
when $i=n-1$.
Since the congruence relations
\begin{align*}
& f^\natural_{\zeta,n-1}(
x_n-x_{n-1}, 
\ldots, \wh{x_n - x_{n-i}},
\ldots, x_n - x_1,x_n) \\
= & f^\natural_{\zeta,n-1}
|_{P_{n-1}^{-1} w_{n-1,0} P_{n-1}}
(x_1, \ldots, \wh{x_{n-i}},
\ldots, x_n) \\
\equiv & f^{\sha}_{\zeta,n-1}
|_{P_{n-1}^{-1} w_{n-1,0} P_{n-1}}
(x_1, \ldots, \wh{x_{n-i}},
\ldots, x_n) \\
\equiv & (-1)^{n-2}
f^\sha_{\zeta,n-1}
(x_1, \ldots, \wh{x_{n-i}},\ldots, x_n) \\
\equiv & (-1)^{n-2}
f^\natural_{\zeta,n-1}
(x_1, \ldots, \wh{x_{n-i}},\ldots, x_n)
\end{align*}
and
\begin{align*}
& f^\natural_{\zeta,n-1}
(x_n-x_{n-1},\ldots,x_n -x_{n-i},
\ldots, x_n-x_1) \\
= & f^\natural_{\zeta,n-1}
|_{P_{n-1}^{-1} w_{n-1,0} P_{n-1}}
(x_2-x_1, \ldots, x_n - x_1) \\
\equiv & f^\sha_{\zeta,n-1}
|_{P_{n-1}^{-1} w_{n-1,0} P_{n-1}}
(x_2-x_1, \ldots, x_n - x_1) \\
\equiv & (-1)^{n-2} f^\sha_{\zeta,n-1}
(x_2-x_1, \ldots, x_n - x_1) \\
\equiv & (-1)^{n-2} f^\natural_{\zeta,n-1}
(x_2-x_1, \ldots, x_n - x_1) \\
\end{align*}
holds modulo $M_{n-1,+} + M_{n-1,\le n-2}$,
the formal power series
$$
\left. \left(
\frac{f^\natural_{\zeta,n-1}(x_1,\ldots,x_i,
\wh{x_{i+1}},\ldots,x_n)
- f^\natural_{\zeta,n-1}(x_1,
\ldots, \wh{x_i},x_{i+1},\ldots,x_n)}{
x_i -x_{i+1}}
\right)\right|_{(-1)^{n-1}P^{-1} w_0 P}
$$
is congruent modulo $M_{n,+} + M_{n,\le n-2}$ to
$$
- \frac{f^\natural_{\zeta,n-1}
(x_1,\ldots,
\wh{x_{n-(i+1)}},\ldots,x_n)
- f^\natural_{\zeta,n-1}
(x_1, \ldots, \wh{x_{n-i}},
\ldots, x_n)}{
x_{n-(i+1)} -x_{n-i}}
$$
when $i \le n-2$, and is congruent
modulo $M_{n,+} + M_{n,\le n-2}$ to
$$
\frac{f^\natural_{\zeta,n-1}
(x_2-x_1,\ldots,x_n -x_1)
- f^\natural_{\zeta,n-1}
(x_2, \ldots, x_n)}{x_{1}}
$$
when $i=n-1$.
Hence we have
\begin{equation}\label{eq:step2}
\begin{array}{rl}
2 (f^\natural_{\zeta,n} - f^*_{\zeta,n})|_{
(-1)^{n-1} P^{-1} w_0 P -1} 
\equiv & \frac{f^\natural_{\zeta,n-1}
(x_2-x_1,\ldots,x_n -x_1)
- f^\natural_{\zeta,n-1}
(x_2, \ldots, x_n)}{
x_{1}} \\
& - \frac{f^\natural_{\zeta,n-1}
(x_1, \ldots,x_{n-1})- 
f^\natural_{\zeta,n-1}
(x_1, \ldots, \wh{x_{n-1}}, x_n)}{
x_{n-1}-x_n}.
\end{array}
\end{equation}
modulo $M_{n,+} + M_{n,\le n-2}$.

\subsection{Step 3}

We apply the operator $\sh_{n,1}$ to the formula \eqref{eq:step2}.

\subsubsection{ }
First we apply $\sh_{n,1}$ to the first term in the right
hand side of \eqref{eq:step2}.
Since
$$
\sh_{n,1} = \sum_{i=1}^n
\begin{pmatrix}
1 & 2 & \cdots & i & i+1 & \cdots & n \\
i & 1 & \cdots & i-1 & i+1 & \cdots & n
\end{pmatrix},
$$
we have
\begin{align*}
& \left. \left( 
\frac{f^\natural_{\zeta,n-1}
(x_2-x_1,\ldots,x_n-x_1)
- f^\natural_{\zeta,n-1}
(x_2,\ldots,x_n)}{x_1}
\right) \right|_{\sh_{n,1}} \\
= & \frac{f^\natural_{\zeta,n-1}
(x_2-x_1,\ldots,x_n - x_1)
- f^\natural_{\zeta,n-1}
(x_2,\ldots,x_n)}{x_1} \\
& + \sum_{i=2}^n
\frac{1}{x_2}
\left(
\begin{array}{l} 
f^\natural_{\zeta,n-1}
(x_3-x_2,\ldots, x_i - x_2, x_1-x_2,x_{i+1}-x_2,
\ldots,x_n-x_2) \\
- f^\natural_{\zeta,n-1}
(x_3,\ldots,x_i,x_1,x_{i+1},\ldots,x_n)
\end{array} \right) \\
= & \frac{f^\natural_{\zeta,n-1}
(x_2-x_1,\ldots,x_n - x_1)
- f^\natural_{\zeta,n-1}
(x_2,\ldots,x_n)}{x_1} \\
& + \frac{1}{x_2}
\left(
(f^\natural_{\zeta,n-1}|_{\sh_{n-1,1}})
(x_1-x_2,x_3-x_2,\ldots,x_n-x_2)
- (f^\natural_{\zeta,n-1}|_{\sh_{n-1,1}})
(x_1,x_3,\ldots,x_n)
\right).
\end{align*}
Hence we have
\begin{equation}\label{eq:step31}
\begin{array}{rl}
& \left. \left( 
\frac{f^\natural_{\zeta,n-1}
(x_2-x_1,\ldots,x_n-x_1)
- f^\natural_{\zeta,n-1}
(x_2,\ldots,x_n)}{x_1}
\right) \right|_{\sh_{n,1}} \\
\equiv & \frac{f^\natural_{\zeta,n-1}
(x_2-x_1,\ldots,x_n - x_1)
- f^\natural_{\zeta,n-1}
(x_2,\ldots,x_n)}{x_1}
\end{array}
\end{equation}
modulo $M_{n,+} + \Q \zeta(n)$.

\subsubsection{ }
Next we apply $\sh_{n,1}$ to the second term
in the right hand side of \eqref{eq:step2}.
We have
\begin{align*}
& \left. \left( 
\frac{f^\natural_{\zeta,n-1}
(x_1,\ldots,x_{n-1})
- f^\natural_{\zeta,n-1}
(x_1,\ldots,\wh{x_{n-1}},x_n)}{
x_{n-1}-x_n}
\right) \right|_{\sh_{n,1}} \\
= & \sum_{i=1}^{n-2}
\frac{f^\natural_{\zeta,n-1}
(x_2,\ldots,x_i,x_1,x_{i+1},\ldots, x_{n-1})
- f^\natural_{\zeta,n-1}
(x_2,\ldots,x_i,x_1,x_{i+1},
\ldots,\wh{x_{n-1}},x_n)}{x_{n-1}-x_n} \\
& + \frac{f^\natural_{\zeta,n-1}
(x_2,\ldots,x_{n-1},x_1)
-f^\natural_{\zeta,n-1}
(x_2,\ldots,x_{n-1},x_n)}{x_1 - x_n} \\
& + \frac{f^\natural_{\zeta,n-1}
(x_2,\ldots,x_n)
- f^\natural_{\zeta,n-1}
(x_2,\ldots,x_{n-1},x_1)}{x_n-x_1} \\
= &
\frac{1}{x_{n-1}-x_n}
\left(
\begin{array}{l}
(f^\natural_{\zeta,n-1}|_{\sh_{n-1,1}}
(x_1,\ldots,x_{n-1})
- f^\natural_{\zeta,n-1}
(x_2,\ldots,x_{n-1},x_1)) \\
- (f^\natural_{\zeta,n-1}|_{\sh_{n-1,1}}
(x_1,\ldots,\wh{x_{n-1}},x_n)
- f^\natural_{\zeta,n-1}
(x_2,\ldots,\wh{x_{n-1}},x_n,x_1))
\end{array}
\right) \\
& + 2 \cdot
\frac{f^\natural_{\zeta,n-1}
(x_2,\ldots,x_{n-1},x_1)
-f^\natural_{\zeta,n-1}
(x_2,\ldots,x_n)}{x_1 - x_n}.
\end{align*}
Hence we have
\begin{equation}\label{eq:step32}
\begin{array}{rl}
& \left. \left( 
\frac{f^\natural_{\zeta,n-1}
(x_1,\ldots,x_{n-1})
- f^\natural_{\zeta,n-1}
(x_1,\ldots,\wh{x_{n-1}},x_n)}{
x_{n-1}-x_n}
\right) \right|_{\sh_{n,1}} \\
\equiv & \frac{1}{x_{n-1}-x_n}
\left(
f^\natural_{\zeta,n-1}
(x_2,\ldots,\wh{x_{n-1}},x_n,x_1)
- f^\natural_{\zeta,n-1}
(x_2,\ldots,x_{n-1},x_1))
\right) \\
& + 2 \cdot
\frac{f^\natural_{\zeta,n-1}
(x_2,\ldots,x_{n-1},x_1)
-f^\natural_{\zeta,n-1}
(x_2,\ldots,x_n)}{x_1 - x_n}
\end{array}
\end{equation}
modulo $M_{n,+} + \Q \zeta(n)$.

\subsubsection{ }
By \eqref{eq:step31} and \eqref{eq:step32},
we have
\begin{equation}\label{eq:step33}
\begin{array}{rl}
& 2(f^\natural_{\zeta,n} - f^*_{\zeta,n})|_{
((-1)^{n-1} P^{-1} w_0 P -1) \sh_{n,1}} \\
\equiv &
\frac{f^\natural_{\zeta,n-1}
(x_2-x_1,\ldots,x_n-x_1)
-f^\natural_{\zeta,n-1}
(x_2,\ldots,x_n)}{x_1} \\
& - \frac{f^\natural_{\zeta,n-1}
(x_2,\ldots,\wh{x_{n-1}},x_n,x_1)
-f^\natural_{\zeta,n-1}
(x_2,\ldots,x_{n-1},x_1)}{x_{n-1}-x_n} \\
& - 2 \cdot
\frac{f^\natural_{\zeta,n-1}
(x_2,\ldots,x_{n-1},x_1)
- f^\natural_{\zeta,n-1}
(x_2,\ldots,x_n)}{x_1-x_n}
\end{array}
\end{equation}
modulo $M_{n,+} + M_{n,\le n-2} + \Q \zeta(n)$.

\subsection{Step 4}
We apply the operator $\iota(\tau_n)$ to the congruence
relation \eqref{eq:step33}.
Since
$$
\iota(\tau_n) =
\begin{pmatrix}
-1 & -1 & \cdots & \cdots & -1 \\
0 & 1 & 0 & \cdots & 0 \\
\vdots & \ddots &  \ddots & \ddots & \vdots \\
\vdots & & \ddots & \ddots & 0 \\
0 & \cdots & \cdots & 0 & 1
\end{pmatrix}
\in \GL_n(\Z)
$$
and $\tau_n^{-1} = \tau_n$, we have
$$
f|_{\tau_n}(x_1,\ldots,x_n)
= f(-x_1,x_2-x_1,\ldots,x_n-x_1)
$$
for a general $f \in \R[[x_1,\ldots,x_n]]$.
Hence we have
\begin{align*}
& 2(f^\natural_{\zeta,n} - f^*_{\zeta,n})|_{
((-1)^{n-1} P^{-1} w_0 P -1) \sh_{n,1}\iota(\tau_n)} \\
\equiv &
\frac{f^\natural_{\zeta,n-1}
(x_2,\ldots,x_n)
-f^\natural_{\zeta,n-1}
(x_2-x_1,\ldots,x_n-x_1)}{-x_1} \\
& - \frac{f^\natural_{\zeta,n-1}
(x_2-x_1,\ldots,\wh{x_{n-1}-x_1},x_n-x_1,-x_1)
-f^\natural_{\zeta,n-1}
(x_2-x_1,\ldots,x_{n-1}-x_1,-x_1)}{
x_{n-1}-x_n} \\
& - 2 \cdot
\frac{f^\natural_{\zeta,n-1}
(x_2-x_1,\ldots,x_{n-1}-x_1,-x_1)
- f^\natural_{\zeta,n-1}
(x_2-x_1,\ldots,x_n-x_1)}{-x_n}
\end{align*}
modulo $M_{n,+} + M_{n,\le n-2} + \Q \zeta(n)$.
We set
$$
Q_{n-1} = \eps_{n-1} w_{n-1,0}
P_{n-1}^{-1} w_{n-1,0} P_{n-1}
= \begin{pmatrix}
-1 & \cdots & \cdots & \cdots & -1 \\
1 & 0 & \cdots & \cdots & 0 \\
0 & \ddots & \ddots & & \vdots \\
\vdots & \ddots & \ddots & \ddots & \vdots \\
0 & \cdots & 0 & 1 & 0
\end{pmatrix} \in \GL_{n-1}(\Z).
$$
We then have
\begin{align*}
& 2(f^\natural_{\zeta,n} - f^*_{\zeta,n})|_{
((-1)^{n-1} P^{-1} w_0 P -1) \sh_{n,1}\iota(\tau_n)} \\
\equiv &
\frac{f^\natural_{\zeta,n-1}
(x_2,\ldots,x_n)
-f^\natural_{\zeta,n-1}
(x_2-x_1,\ldots,x_n-x_1)}{-x_1} \\
& - \frac{f^\natural_{\zeta,n-1}|_{Q_{n-1}^{-1}}
(x_1,\ldots,\wh{x_{n-1}},x_n)
-f^\natural_{\zeta,n-1}|_{Q_{n-1}^{-1}}
(x_1,\ldots,x_{n-1})}{
x_{n-1}-x_n} \\
& - 2 \cdot
\frac{f^\natural_{\zeta,n-1}|_{Q_{n-1}^{-1}}
(x_1,\ldots,x_{n-1})
- f^\natural_{\zeta,n-1}|_{Q_{n-1}^{-1}}
(x_1-x_n,\ldots,x_{n-1}-x_n)}{-x_n}
\end{align*}
modulo $M_{n,+} + M_{n,\le n-2} + \Q \zeta(n)$.
Since
$$
f^\natural_{\zeta,n-1}|_{Q_{n-1}^{-1}}
\equiv f^\natural_{\zeta,n-1}|_{\eps_{n-1}}
$$
modulo $M_{n-1,+} + M_{n-1,\le n-2}$,
we have
\begin{equation}\label{eq:step4}
\begin{array}{rl}
& 2(f^\natural_{\zeta,n} - f^*_{\zeta,n})|_{
((-1)^{n-1} P^{-1} w_0 P -1) \sh_{n,1}\iota(\tau_n)} \\
\equiv &
\frac{f^\natural_{\zeta,n-1}
(x_2,\ldots,x_n)
-f^\natural_{\zeta,n-1}
(x_2-x_1,\ldots,x_n-x_1)}{-x_1} \\
& - \frac{f^\natural_{\zeta,n-1}|_{\eps_{n-1}}
(x_1,\ldots,\wh{x_{n-1}},x_n)
-f^\natural_{\zeta,n-1}|_{\eps_{n-1}}
(x_1,\ldots,x_{n-1})}{
x_{n-1}-x_n} \\
& - 2 \cdot
\frac{f^\natural_{\zeta,n-1}|_{\eps_{n-1}}
(x_1,\ldots,x_{n-1})
- f^\natural_{\zeta,n-1}|_{\eps_{n-1}}
(x_1-x_n,\ldots,x_{n-1}-x_n)}{-x_n}
\end{array}
\end{equation}
modulo $M_{n,+} + M_{n,\le n-2} + \Q \zeta(n)$.

\subsection{Step 5}
By \eqref{eq:step2} and \eqref{eq:step4}, we have
\begin{align*}
& 2(f^\natural_{\zeta,n} - f^*_{\zeta,n})|_{
((-1)^{n-1} P^{-1} w_0 P -1)(1+ \sh_{n,1}\iota(\tau_n))} \\
\equiv &
2 \cdot \frac{f^\natural_{\zeta,n-1}
(x_2-x_1,\ldots,x_n-x_1)
- f^\natural_{\zeta,n-1}
(x_2,\ldots,x_n)}{x_1} \\
& + \frac{f^\natural_{\zeta,n-1}|_{1-\eps_{n-1}}
(x_1,\ldots,\wh{x_{n-1}},x_n)
-f^\natural_{\zeta,n-1}|_{1-\eps_{n-1}}
(x_1,\ldots,x_{n-1})}{x_{n-1}-x_n} \\
& - 2 \cdot
\frac{f^\natural_{\zeta,n-1}|_{\eps_{n-1}}
(x_1-x_n,\ldots,x_{n-1}-x_n)
- f^\natural_{\zeta,n-1}|_{\eps_{n-1}}
(x_1,\ldots,x_{n-1})}{x_n}
\end{align*}
modulo $M_{n,+} + M_{n,\le n-2} + \Q \zeta(n)$.
Hence by \eqref{eq:step1}, we have
\begin{equation}\label{eq:step5}
\begin{array}{rl}
f^\natural_{\zeta,n}|_{\eps_n-1}
\equiv &
\frac{f^\natural_{\zeta,n-1}
(x_2-x_1,\ldots,x_n-x_1)
- f^\natural_{\zeta,n-1}
(x_2,\ldots,x_n)}{x_1} \\
& + \frac{f^\natural_{\zeta,n-1}|_{1-\eps_{n-1}}
(x_1,\ldots,\wh{x_{n-1}},x_n)
-f^\natural_{\zeta,n-1}|_{1-\eps_{n-1}}
(x_1,\ldots,x_{n-1})}{2(x_{n-1}-x_n)} \\
& - 
\frac{f^\natural_{\zeta,n-1}|_{\eps_{n-1}}
(x_1-x_n,\ldots,x_{n-1}-x_n)
- f^\natural_{\zeta,n-1}|_{\eps_{n-1}}
(x_1,\ldots,x_{n-1})}{x_n}
\end{array}
\end{equation}
modulo $M_{n,+} + M_{n,\le n-2} + \Q \zeta(n)$.

\subsection{Step 6}
Let $f^{\natural,\even}_{\zeta,n}$ and
$f^{\natural,\odd}_{\zeta,n}$ denote the even degree part
and the odd degree part of $f^\natural_{\zeta,n}$,
respectively.
By \eqref{eq:step5}, we have
\begin{align*}
f^{\natural,\odd}_{\zeta,n}|_{\eps_n-1}
\equiv &
\frac{f^{\natural,\even}_{\zeta,n-1}
(x_2-x_1,\ldots,x_n-x_1)
- f^{\natural,\even}_{\zeta,n-1}
(x_2,\ldots,x_n)}{x_1} \\
& + \frac{f^{\natural,\even}_{\zeta,n-1}|_{1-\eps_{n-1}}
(x_1,\ldots,\wh{x_{n-1}},x_n)
-f^{\natural,\even}_{\zeta,n-1}|_{1-\eps_{n-1}}
(x_1,\ldots,x_{n-1})}{2(x_{n-1}-x_n)} \\
& - 
\frac{f^{\natural,\even}_{\zeta,n-1}|_{\eps_{n-1}}
(x_1-x_n,\ldots,x_{n-1}-x_n)
- f^{\natural,\even}_{\zeta,n-1}|_{\eps_{n-1}}
(x_1,\ldots,x_{n-1})}{x_n}
\end{align*}
modulo $M_{n,+} + M_{n,\le n-2}$.
Since
$f^{\natural,\odd}_{\zeta,n}|_{\eps_n}
= - f^{\natural,\odd}_{\zeta,n}$ and
$f^{\natural,\even}_{\zeta,n-1}|_{\eps_{n-1}}
= - f^{\natural,\even}_{\zeta,n-1}$, we have
\begin{align*}
- 2 f^{\natural,\odd}_{\zeta,n}
\equiv &
\frac{f^{\natural,\even}_{\zeta,n-1}
(x_2-x_1,\ldots,x_n-x_1)
- f^{\natural,\even}_{\zeta,n-1}
(x_2,\ldots,x_n)}{x_1} \\
& - 
\frac{f^{\natural,\even}_{\zeta,n-1}
(x_1-x_n,\ldots,x_{n-1}-x_n)
- f^{\natural,\even}_{\zeta,n-1}
(x_1,\ldots,x_{n-1})}{x_n}
\end{align*}
modulo $M_{n,+} + M_{n,\le n-2}$.
Let $k_1, \ldots, k_n \ge 1$ be integers.
Set $k := k_1 + \cdots + k_n$.
Suppose that $k \not\equiv n \mod{2}$.
By comparing the coefficients
of $x_1^{k_1-1} \cdots x_n^{k_n-1}$
in the both sides of the equality above,
we have
\begin{align*}
& -2 \zeta^{\natural,\reg,*}(k_1,\ldots,k_n) \\
\equiv & (-1)^{k_1} \sum_{\ell_2, \ldots,\ell_n \ge 0
\atop \ell_2 + \cdots + \ell_n = k_1}
\binom{k_2+\ell_2-1}{\ell_2}
\cdots \binom{k_n+\ell_n-1}{\ell_n}
\zeta^{\natural,\reg,*} 
(k_2+\ell_2,\ldots,k_n+\ell_n) \\
& - (-1)^{k_n} \sum_{\ell_1, \ldots,\ell_{n-1} \ge 0
\atop \ell_1 + \cdots + \ell_{n-1} = k_n}
\binom{k_1+\ell_1-1}{\ell_1}
\cdots \binom{k_{n-1}+\ell_{n-1}-1}{\ell_{n-1}}
\zeta^{\natural,\reg,*}
(k_1+\ell_1,\ldots,k_{n-1}+\ell_{n-1})
\end{align*}
modulo $Z_{k,\le n-2} + \sum_{k',k'' \ge 1
\atop k' + k'' = k} Z_{k'}Z_{k''}$.
Since we have
$\zeta^{\natural,\reg,*}(k_1,\ldots,k_n)
\equiv \zeta^{\reg,*}(k_1,\ldots,k_n)$
modulo $Z_{k,\le n-2}$ and
since we have
$\zeta^{\natural,\cF}(k_1,\ldots,k_n)
\equiv 2 \zeta^{\natural,\reg,*}(k_1,\ldots,k_n)$
modulo $\sum_{k',k'' \ge 1 \atop k' + k'' = k} 
Z_{k'}Z_{k''}$,
we have
\begin{align*}
& \zeta^{\natural,\cF}(k_1,\ldots,k_n) \\
\equiv & (-1)^{k_n} \sum_{\ell_1, \ldots,\ell_{n-1} \ge 0
\atop \ell_1 + \cdots + \ell_{n-1} = k_n}
\binom{k_1+\ell_1-1}{\ell_1}
\cdots \binom{k_{n-1}+\ell_{n-1}-1}{\ell_{n-1}}
\zeta^{\natural,\reg,*}
(k_1+\ell_1,\ldots,k_{n-1}+\ell_{n-1}) \\
& -(-1)^{k_1} \sum_{\ell_2, \ldots,\ell_n \ge 0
\atop \ell_2 + \cdots + \ell_n = k_1}
\binom{k_2+\ell_2-1}{\ell_2}
\cdots \binom{k_n+\ell_n-1}{\ell_n}
\zeta^{\natural,\reg,*} 
(k_2+\ell_2,\ldots,k_n+\ell_n) 
\end{align*}
modulo $Z_{k,\le n-2} + \sum_{k',k'' \ge 1
\atop k' + k'' = k} Z_{k'}Z_{k''}$.
This completes the proof of
Proposition \ref{prop:main} (2).

\section{The main result}

For each integer $k \ge 0$, we denote
by $Z^\cF_k$ the $\Q$-vector space generated by
the finite real multiple zeta values of
weight $k$.
A numerical experiment by Kaneko and Zagier
suggest that $Z^\cF_k = Z_k$.
The main result of this article gives
an affirmative answer to this expectation.

\begin{thm} \label{thm:main}
For any integer $k \ge 0$, we have
$Z^\cF_k = Z_k$.
\end{thm}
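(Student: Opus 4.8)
The inclusion $Z^\cF_k\subseteq Z_k$ is already recorded in the introduction, so the whole content is the reverse inclusion $Z_k\subseteq Z^\cF_k$. The plan is to prove the sharper statement $Z_{k,\le n}\subseteq Z^\cF_k$ for every pair $(k,n)$, by induction on $(k,n)$ ordered lexicographically; since $Z_k=Z_{k,\le k}$ this yields the theorem. Two structural facts drive the induction. First, because the $\zeta^\cF(\bk)$ satisfy the series (harmonic) shuffle relations, the graded space $Z^\cF=\bigoplus_k Z^\cF_k$ is a subalgebra of $\bigoplus_k Z_k$; hence once $Z_{k'}=Z^\cF_{k'}$ is known for all $k'<k$ (the outer induction hypothesis), every product lands in $Z^\cF$, i.e. $Z_{k,+}\subseteq Z^\cF_k$. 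Second, the inner induction hypothesis gives $Z_{k,\le n-1}\subseteq Z^\cF_k$. Thus at the stage $(k,n)$ the subspace $N:=Z_{k,+}+Z_{k,\le n-1}$ is already contained in $Z^\cF_k$, and it remains to capture the admissible indices of depth exactly $n$ modulo $N$.

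Here the argument splits according to the parity of $k-n$. When $k\not\equiv n\pmod 2$ I would use the elementary comparison $\zeta^{\natural,\cF}(\bk)\equiv 2\,\zeta^{\natural,\reg,*}(\bk)\pmod{Z_{k,+}}$ (the identity of the $\pm$-series with twice the $+$-series modulo products, valid in this parity and used already in Step 6), combined with the triangular relation $\zeta^{\natural,\reg,*}(\bk)\equiv\zeta(\bk)\pmod{Z_{k,\le n-1}}$ coming from the definition of $\zeta^{\natural,\reg,*}$ paralleling Lemma \ref{lem1} (the $\phi=\mathrm{id}$ term is $\zeta(\bk)$, the rest have smaller depth). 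For admissible $\bk$ this gives $2\zeta(\bk)\equiv\zeta^{\natural,\cF}(\bk)\pmod N$, and since the right-hand side lies in $Z^\cF_k$ we conclude $\zeta(\bk)\in Z^\cF_k$. Alternatively one may quote the parity theorem, which makes the depth-$n$ part vanish modulo $N$ in this parity.

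The essential case is $k\equiv n\pmod 2$ with $k\ge 3$; this is where Proposition \ref{prop:main}(2) is used. I would apply it not in depth $n$ but in depth $n+1$: for every index $\bk'=(k_1,\dots,k_{n+1})$ of weight $k$ (so that $k\not\equiv n+1$) the proposition expresses the finite value $\zeta^{\natural,\cF}(\bk')\in Z^\cF_k$ as an explicit $\Q$-combination $\Theta(\bk')$ of ordinary multiple zeta values of depth $n$, modulo $Z_{k,+}+Z_{k,\le n-1}=N$. Since both $\zeta^{\natural,\cF}(\bk')$ and $N$ lie in $Z^\cF_k$, each $\Theta(\bk')$ lies in $Z^\cF_k$. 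The theorem at the stage $(k,n)$ then follows once one shows that, as $\bk'$ runs over all depth-$(n+1)$ indices of weight $k$, the classes $[\Theta(\bk')]$ span the quotient $Z_{k,\le n}/N$ --- equivalently, that every admissible depth-$n$ value is a rational combination of the $\Theta(\bk')$ modulo $N$.

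This spanning statement is the main obstacle and the ``simple linear algebra'' alluded to in the introduction. Writing $\Theta(\bk')=-(-1)^{k_1}\Sigma_1+(-1)^{k_{n+1}}\Sigma_2$, where $\Sigma_1$ and $\Sigma_2$ are the two binomial sums appearing in \eqref{eq1} (now for the depth-$(n+1)$ index $\bk'$; the first deletes the head $k_1$ and redistributes it, the second deletes and redistributes the tail $k_{n+1}$), I would prove surjectivity by exhibiting, for each admissible target $\mathbf a=(a_1,\dots,a_n)$, a depth-$(n+1)$ index $\bk'(\mathbf a)$ obtained by splitting one entry of $\mathbf a$ into two, for which $\Theta(\bk'(\mathbf a))$ contains $\zeta(\mathbf a)$ with a nonzero binomial coefficient while every remaining term is either of strictly lower depth, or a product (hence in $N$), or an admissible depth-$n$ value strictly smaller than $\mathbf a$ in a fixed total order --- after rewriting the regularized non-admissible tails as combinations of admissible indices modulo $Z_{k,+}$. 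The transition matrix is then triangular with invertible diagonal, so the $\Theta(\bk')$ span; this is the generating-function statement that the explicit operator of Step 6 carrying $f^{\natural,\even}_{\zeta,n}$ to $f^{\natural,\odd}_{\zeta,n+1}$ admits a one-sided inverse modulo lower depth. The genuine difficulty is choosing the order so that the two sums $\Sigma_1,\Sigma_2$ do not interfere, since either can push weight into an extreme coordinate; I expect to need an order comparing both ends of the index simultaneously rather than plain lexicographic order. Finally the base cases $k\le 2$ are immediate: $Z_0=Z^\cF_0=\Q$, $Z_1=0$, and $\zeta^\cF(2)=\sum_{m\ne 0}m^{-2}=2\zeta(2)$ gives $Z^\cF_2=Z_2$, covering the weight $k=2$ excluded from Proposition \ref{prop:main}(2).
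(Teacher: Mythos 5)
Your reduction of the theorem is exactly the paper's own: the lexicographic induction on $(k,n)$, the use of the series--shuffle product formulae to see that $\bigoplus_k Z^\cF_k$ is a subalgebra (hence $Z_{k,+}\subseteq Z^\cF_k$ once lower weights are done), the passage to the $\natural$-values via Lemma \ref{lem1}, the elementary treatment of the parity $k\not\equiv n\pmod 2$ (your congruence $\zeta^{\natural,\cF}(\bk)\equiv 2\zeta^{\natural,\reg,*}(\bk)$ modulo $Z_{k,+}$ is the identity the paper uses in Step 6, and the base cases $k\le 2$ are handled the same way), and the application of Proposition \ref{prop:main}(2) in depth $n+1$ so that the classes of the right-hand sides of \eqref{eq1} must span $Z_{k,\le n}$ modulo $N=Z_{k,+}+Z_{k,\le n-1}$. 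That spanning statement is literally Theorem \ref{thm:main2}. Up to this point your proposal is correct and faithful to the paper.

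The gap is that you do not prove the spanning statement, and it is the mathematical core of the paper, not a routine verification. Your plan --- exhibit, for each admissible target $\mathbf{a}$, an index $\bk'(\mathbf{a})$ whose value $\Theta(\bk'(\mathbf{a}))$ contains $\zeta(\mathbf{a})$ with nonzero coefficient and only ``smaller'' terms otherwise, so that the transition matrix is triangular --- is precisely the step you admit you cannot carry out (``the genuine difficulty is choosing the order\dots''), and there is no evidence it can be. The paper does something structurally different: it proves Theorem \ref{thm:main2} by duality. A functional $\alpha$ on $Z_{k,\le n}$ killing $Z'+W$ has a generating polynomial $f=\sum\alpha(\zeta^{\reg,*}(k_1,\ldots,k_n))\,x_1^{k_1-1}\cdots x_n^{k_n-1}$; killing $Z'$ forces $f$ into the linearized double shuffle space $D_{n,d}$ (this is where the regularized double shuffle relations among the depth-$n$ values themselves enter, not merely the rewriting of non-admissible tails); killing $W$ gives the cyclic-invariance hypothesis; and Proposition \ref{ques2}, proved via the diagonal-translation-invariance Lemmas \ref{lem_b}--\ref{lem_a}, forces $f=0$. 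Note that the proof of Proposition \ref{ques2} uses \emph{both} families of conditions defining $D_{n,d}$; your triangularity scheme uses neither, so it would in effect be proving a strictly stronger, purely formal spanning statement in which the depth-$n$ values are treated as independent symbols modulo products. The sign cancellations between your $\Sigma_1$ and $\Sigma_2$ that you flag are exactly where such a formal statement threatens to fail, and the paper gives no reason to believe it holds. So the proposal reproduces the paper's (correct) reduction but leaves its central result --- Theorem \ref{thm:main2}, equivalently Proposition \ref{ques2} --- unproven.
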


\begin{rmk}
By definition we have
$$
\zeta^\cF(k_1,\ldots,k_n)
=\sum_{i=0}^n 
(-1)^{k_i + \cdots + k_n}
\zeta^{\reg,*}(k_1,\ldots,k_i)
\zeta^{\reg,*}(k_n,\ldots,k_{i+1}).
$$
By using
$$
\zeta^{\cF,\sha}(k_1,\ldots,k_n)
= \sum_{i=0}^n 
(-1)^{k_i + \cdots + k_n}
\zeta^{\reg,\sha}(k_1,\ldots,k_i)
\zeta^{\reg,\sha}(k_n,\ldots,k_{i+1})
$$
instead of $\zeta^\cF(k_1,\ldots,k_n)$,
we can construct the $\Q$-vector space
$Z^{\cF,\sha}_k$ analogous to $Z^\cF_k$.
As will be mentioned in \cite{KZ}, we can show that
$\zeta^{\cF,\sha}(k_1,\ldots,k_n) \equiv 
\zeta^{\cF}(k_1,\ldots,k_n)$ modulo
$\zeta(2) Z_{k_1+\cdots + k_n-2}$.
Hence it follows from Theorem \ref{thm:main}
that we have 
$Z^{\cF,\sha}_k + \zeta(2) Z_{k-2} = Z_k$.
\end{rmk}

It is easy to check that Theorem \ref{thm:main}
holds for $k \le 2$.
It follows from Lemma \ref{lem1} that
$Z^\cF_k$ is equal to the $\Q$-vector space
generated by the finite real multiple zeta values
with $\natural$ of weight $k$.
Hence by induction of the pair of
the weight and the depth of an index,
we can reduce Theorem \ref{thm:main} to
the following statement:

\begin{thm} \label{thm:main2}
Let $k ,n \ge 0$ be integers.
When $\bk$ runs over the 
(not necessarily admissible) indices of
weight $k$ and depth $n$,
the right hand sides of \eqref{eq1}
generate the space $Z_{k,\le n-1}$
modulo the space
$$
Z_{k,\le n-2} + 
\sum_{k',k'', n',n'' \ge 1
\atop k'+k'' = k, n' + n'' = n - 1}
Z_{k',\le n'} \cdot Z_{k'', \le n''}.
$$
\end{thm}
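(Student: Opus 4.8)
The plan is to reduce the statement to a purely combinatorial spanning problem and then settle that by linear algebra. First I would dispose of the parity. If $k\not\equiv n-1\pmod 2$, then every multiple zeta value of weight $k$ and depth $n-1$ lies in $Z_{k,+}+Z_{k,\le n-2}$ by the parity result (\cite[Corollary 8]{IKZ}, \cite{T}); the products occurring there have total depth at most $n-1$, so the quotient in the statement already vanishes and there is nothing to prove. Hence I may assume $k\equiv n-1\pmod 2$, which is exactly the range in which \eqref{eq1} is the operative formula. Write $R(\bk)\in Z_{k,\le n-1}$ for the right hand side of \eqref{eq1} attached to a depth-$n$ weight-$k$ index $\bk$, and write $\mathcal D_{n-1}$ for the target quotient of $Z_{k,\le n-1}$. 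The stuffle part of the regularized double shuffle relations, read modulo products and modulo lower depth, collapses each product $\zeta(\mathbf a)\,\zeta(\mathbf b)$ of depths summing to $n-1$ into the index-shuffle relation $\sum_{\mathrm{interleavings}}\zeta(\cdots)\equiv 0$. These relations define a combinatorial space $V$ — the weight-$k$, length-$(n-1)$ indecomposables of the shuffle algebra on the letters $\{1,2,3,\dots\}$, a letter $c$ carrying weight $c$, which by the structure theory of shuffle algebras is spanned by Lyndon words and whose dual is the corresponding piece of the free Lie algebra — and $V$ surjects onto $\mathcal D_{n-1}$. It therefore suffices to prove that the $R(\bk)$ span $V$.

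Second, I would set up the two summands of $R(\bk)$ as raising operators and dualize. The second sum in \eqref{eq1} is, up to the sign $(-1)^{k_n}$, the image of the head $(k_1,\dots,k_{n-1})$ under the operator $T$ that distributes a budget $k_n$ over the $n-1$ entries with weights $\prod_i\binom{k_i+\ell_i-1}{\ell_i}$, and the first sum is the same operator applied to the tail $(k_2,\dots,k_n)$. On generating functions each $T$ is the invertible substitution $x_i\mapsto x_i/(1-x_i)$, and a short computation shows its transpose on a functional $\lambda$ is the shift $y_i\mapsto 1+y_i$: if $L(y_1,\dots,y_{n-1})=\sum_{\mathbf j}\lambda(\mathbf j)\prod_i y_i^{j_i-1}$, then $\lambda\circ T$ has generating function $L(1+y_1,\dots,1+y_{n-1})$. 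I would also record the reversal identity, checked directly from \eqref{eq1}, that the tail sum for $\bk$ equals the index-reversal of the head sum for the reversed index $\bar\bk=(k_n,\dots,k_1)$. A functional $\lambda$ lies in $V^{\ast}$ exactly when it is primitive for the shuffle relations, i.e.\ a free Lie element, and for such $\lambda$ the index-reversal acts by a fixed sign $\epsilon=\pm1$. Using $R(\bk)=-(\text{tail sum})+(\text{head sum})$, homogeneity of $\lambda$, and these facts, the conditions $\lambda(R(\bk))=0$ become
$$
(-1)^{k_n}\bigl[\textstyle\prod_i y_i^{k_i-1}\bigr]L(1+y)=\epsilon\,(-1)^{k_1}\bigl[\textstyle\prod_i y_i^{\tilde k_i-1}\bigr]L(1+y),
$$
for all $\bk$, where $\tilde\bk=(k_n,k_{n-1},\dots,k_2)$ is the head of $\bar\bk$ and $[\,\cdot\,]$ denotes coefficient extraction. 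The theorem is equivalent to: these relations, together with primitivity and homogeneity of $L$, force $L=0$.

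The last step is the linear-algebra heart, and it is where I expect the real difficulty. The clean feature is that the shift $y\mapsto 1+y$ is triangular: the top-degree part of $L(1+y)$ is $L(y)$ itself, so comparing monomials degree by degree turns the displayed relations into a recursion on the homogeneous data of $L$ with invertible leading term. The obstacle is that a single index $\bk$ constrains only the combination of the head block and the (reversed) tail block, never either block in isolation; the argument must use the reversal sign $\epsilon$ and the primitivity relations simultaneously to decouple the recursion. I would organize this as an induction on the weight carried by the first entry (equivalently on the degree in $y_1$), using the primitivity of $L$ to eliminate the cross terms generated by the tail operator and a monomial order on the indices $\mathbf j$ aligned with a Lyndon-word basis of $V$ to keep the head operator unitriangular. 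Showing rigorously that the interference between the head-raising and tail-raising operators cannot conspire to leave a nonzero primitive $L$ is the one genuinely delicate point; everything else is bookkeeping with binomial coefficients.
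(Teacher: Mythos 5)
Your parity reduction and your dual set-up are sound, and they are in fact the same first moves as the paper's: the paper also passes to a linear functional, encodes it as a generating polynomial, and converts vanishing on the right hand sides of \eqref{eq1} into a symmetry property of a shifted difference series (this is exactly how Proposition \ref{ques2} arises, and your transpose computation $y_i \mapsto 1+y_i$ is the same binomial transform in dual form). The problems lie in what you throw away and in what you leave unproved.

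First, you discard half of the double shuffle structure. Your space $V$ is cut out by the linearized stuffle relations only, so after dualizing the sole standing hypothesis on $L$ is shuffle-primitivity (the free Lie condition), and your reduction replaces Theorem \ref{thm:main2} by the strictly stronger claim that the $R(\bk)$ span $V$ itself rather than its double shuffle quotient. The paper instead uses that its functional $\alpha$ kills \emph{all} products $Z_{k',\le n'}\cdot Z_{k'',\le n''}$; feeding both the stuffle and the iterated-integral shuffle relations of real multiple zeta values into this hypothesis places the generating polynomial $f$ in the full linearized double shuffle space $D_{n,d}$, i.e.\ both $f|_{\sh_{n,i}}=0$ and $f|_{P_n^{-1}\sh_{n,i}}=0$. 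The second family is used essentially in the proof of Proposition \ref{ques2}: once to obtain $f(x_2-x_1,\ldots,x_{n+1}-x_1)=f(x_1-x_{n+1},\ldots,x_n-x_{n+1})$ before Lemma \ref{lem_a} can be applied, and once for the eigenvector property $f|_{Q_n}=(-1)^d f$ that drives the concluding backward induction. Neither symmetry follows from primitivity alone: already in two variables, $f=x_1^3x_2-x_1x_2^3$ is primitive of even degree, yet $f|_{Q_2}(x_1,x_2)=f(-x_2,x_1-x_2)\neq \pm f$. So along your route these tools are unavailable. (Your stronger claim does happen to hold in depth two, where diagonal translation invariance plus antisymmetry already forces $L=0$; but in depth three and higher you have neither a proof of it nor the paper's substitute for one.)

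Second, and fatally, the ``linear-algebra heart'' is missing, and the triangularity you appeal to is illusory. With $L$ homogeneous of degree $k-n+1$ in $n-1$ variables, the relations $\lambda(R(\bk))=0$ involve only coefficients of $M=L(1+y_1,\ldots,1+y_{n-1})$ of total degree at most $k-n$: since every $k_i\ge 1$, no relation ever reaches the top degree of $M$, which is exactly where $L$ lives. Hence there is no ``recursion with invertible leading term'' obtained by comparing monomials degree by degree; every single relation mixes all coefficients of $L$ through binomial coefficients, and the unipotence of the shift gives nothing to induct on by itself. Overcoming precisely this is the content of the paper's Lemmas \ref{lem_b}, \ref{lem_c}, \ref{lem_d} and \ref{lem_a}: the cyclic invariance, combined with the invariance of the difference quotient under $(x_1,\ldots,x_{n+1})\mapsto(-x_1,x_2-x_1,\ldots,x_{n+1}-x_1)$ (which is automatic from its shape), generates the group $\iota_{n+1}(\frS_{n+2})$, forcing diagonal translation invariance of $f$; then $\partial f/\partial x_n=0$ and the $Q_n$-symmetry kill $f$ one variable at a time. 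You state yourself that the corresponding step in your scheme (controlling the ``interference between the head-raising and tail-raising operators'') is the genuinely delicate point and you leave it unresolved; since that step is the entire mathematical content of the theorem, what you have is a reasonable framework, not a proof.
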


\subsection{A variant of Theorem \ref{thm:main}}

Before giving a proof of Theorem \ref{thm:main}, we give a variant of 
Theorem \ref{thm:main} in this paragraph and prove it assuming
Theorem \ref{thm:main}.

For any index $\bk=(k_1,\ldots,k_n)$, 
Kaneko and Zagier introduced in \cite{KZ} 
a variant of $\zeta^\cF(\bk)$, by using the regularization with respect to 
the iterated integral expression instead of that with respect to 
the series expression. According to \cite{KZ}, we denote this variant 
by $\zeta^{\cF,\sha}(\bk)$. By definition we have
$$
\zeta^{\cF,\sha}(\bk) = \sum_{i=0}^n
(-1)^{k_{i+1} + \cdots + k_n}
\zeta^{\reg,\sha}(k_1,\ldots,k_i) \zeta^{\reg,\sha}(k_n,\ldots,k_{i+1}).
$$
For any integers $k \ge 0$, we denote by
$Z_k^{\cF,\sha}$ the $\Q$-vector space generated by
the set $\{\zeta^{\cF,\sha}(\bk)\ |\ |\bk|=k \}$.

\begin{thm} \label{thm:main3}
For any integer $k \ge 0$, we have
$Z^{\cF,\sha}_k = Z_k$.
\end{thm}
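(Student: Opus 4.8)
The plan is to deduce Theorem \ref{thm:main3} from Theorem \ref{thm:main} by comparing the two regularizations that enter the definitions of $\zeta^\cF(\bk)$ and $\zeta^{\cF,\sha}(\bk)$. The key observation is already flagged in the remark following Theorem \ref{thm:main}: one can show that $\zeta^{\cF,\sha}(\bk) \equiv \zeta^{\cF}(\bk)$ modulo $\zeta(2)\,Z_{k-2}$, where $k = |\bk|$. This congruence says that the two finite multiple zeta values differ only by elements of $\zeta(2)\,Z_{k-2}$, so passing from the series regularization to the iterated-integral regularization changes nothing modulo the ideal generated by $\zeta(2)$. The reason such a congruence holds is the classical comparison of the two regularizations via the map $\rho$ of Ihara--Kaneko--Zagier \cite{IKZ}: the series-regularized and shuffle-regularized values of a fixed index agree modulo $\pi^2 = 6\zeta(2)$ times lower-weight multiple zeta values, and this propagates through the bilinear expressions that define $\zeta^\cF$ and $\zeta^{\cF,\sha}$.

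First I would record the congruence $\zeta^{\cF,\sha}(\bk)\equiv\zeta^{\cF}(\bk)\pmod{\zeta(2)\,Z_{k-2}}$ precisely and verify it using the IKZ comparison between $\zeta^{\reg,*}$ and $\zeta^{\reg,\sha}$ applied term-by-term to the two defining sums. Taking the $\Q$-span over all indices $\bk$ of weight $k$, this immediately yields the inclusion $Z^{\cF,\sha}_k \subseteq Z^\cF_k + \zeta(2)\,Z_{k-2}$ and, symmetrically, $Z^\cF_k \subseteq Z^{\cF,\sha}_k + \zeta(2)\,Z_{k-2}$. Combined with Theorem \ref{thm:main}, which gives $Z^\cF_k = Z_k$, we obtain
$$
Z^{\cF,\sha}_k + \zeta(2)\,Z_{k-2} = Z_k,
$$
exactly the equality stated in the remark. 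The remaining task is to remove the correction term $\zeta(2)\,Z_{k-2}$, i.e.\ to upgrade this to $Z^{\cF,\sha}_k = Z_k$.

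Next I would run an induction on the weight $k$ to absorb $\zeta(2)\,Z_{k-2}$. The base cases $k \le 2$ are checked directly (note $\zeta(2)\in Z_2$ is itself realized by a finite real MZV via the parity phenomenon, and for $k\le 2$ there is nothing to subtract). For the inductive step, assume $Z^{\cF,\sha}_{k-2} = Z_{k-2}$. The subtlety is that $\zeta(2)$ lives in weight $2$, so $\zeta(2)\,Z_{k-2}$ has weight $k$ but the values $\zeta^{\cF,\sha}$ of weight $k$ need not obviously contain products by $\zeta(2)$. The clean way around this is to exploit that the finite real MZVs satisfy the regularized double shuffle relations (stated in the introduction as the only relations used), hence the collection $\bigl(Z^{\cF,\sha}_k\bigr)_{k\ge 0}$ is closed under the product: $Z^{\cF,\sha}_{k'}\cdot Z^{\cF,\sha}_{k''}\subseteq Z^{\cF,\sha}_{k'+k''}$. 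Granting $\zeta(2)\in Z^{\cF,\sha}_2$ (which follows from $Z^{\cF,\sha}_2 + \zeta(2)Z_0 = Z_2$ together with the fact that the other generator of $Z_2$ can be taken to be a finite MZV), we get $\zeta(2)\,Z_{k-2} = \zeta(2)\,Z^{\cF,\sha}_{k-2} \subseteq Z^{\cF,\sha}_2 \cdot Z^{\cF,\sha}_{k-2} \subseteq Z^{\cF,\sha}_k$. Feeding this back into $Z^{\cF,\sha}_k + \zeta(2)\,Z_{k-2} = Z_k$ collapses the correction term and yields $Z^{\cF,\sha}_k = Z_k$.

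The main obstacle I anticipate is the multiplicativity input in the last paragraph: one must confirm that the spaces $Z^{\cF,\sha}_k$ genuinely form a graded subalgebra of $Z$, i.e.\ that products of shuffle-regularized finite MZVs decompose as $\Q$-linear combinations of shuffle-regularized finite MZVs of the same total weight. This is the shuffle-product structure for the iterated-integral regularization, and its bookkeeping — especially controlling the regularization of non-admissible indices — is the delicate point; the series-regularization analogue is the shuffle product formula already noted for $\zeta^\cF$ in \S2.3. Establishing the base case $\zeta(2)\in Z^{\cF,\sha}_2$ cleanly is the secondary point requiring care, since it anchors the induction.
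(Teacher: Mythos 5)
Your reduction is the same as the paper's: from Theorem \ref{thm:main} and the congruence $\zeta^{\cF,\sha}(\bk)\equiv\zeta^{\cF}(\bk) \pmod{\zeta(2)Z_{k-2}}$ one gets $Z^{\cF,\sha}_k+\zeta(2)Z_{k-2}=Z_k$, and an induction on the weight then finishes provided one knows $\zeta(2)\cdot Z^{\cF,\sha}_{k-2}\subseteq Z^{\cF,\sha}_{k}$. (Your worry about the anchor $\zeta(2)\in Z^{\cF,\sha}_2$ is misplaced in both directions: your proposed derivation from $Z^{\cF,\sha}_2+\zeta(2)Z_0=Z_2$ is inconclusive, since $Z_2=\Q\,\zeta(2)$ makes that equality vacuous; but the fact itself is immediate, as $\zeta^{\cF,\sha}((2))=2\zeta(2)$ by definition.) The genuine gap is the multiplicativity input. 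You justify the closure claim $Z^{\cF,\sha}_{k'}\cdot Z^{\cF,\sha}_{k''}\subseteq Z^{\cF,\sha}_{k'+k''}$ by appealing to the sentence in the introduction that only the regularized double shuffle relations are used; that is a misreading. That sentence concerns the relations among \emph{ordinary} multiple zeta values used in proving Theorem \ref{thm:main}; it asserts nothing about a product structure on the values $\zeta^{\cF,\sha}$. Nor is such a product formula a formal triviality: the defining sum for $\zeta^{\cF,\sha}(\bk)$ cuts the word only at block boundaries of the index, so it is not the antipode convolution of the shuffle character $\zeta^{\reg,\sha}$ (that convolution is the counit, hence vanishes on nonempty words). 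The closure claim is a real theorem needing proof, and without it your induction does not close.

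The paper fills exactly this hole with Proposition \ref{prop:below}: if every entry of $\bk'$ is strictly larger than $1$, then
$$
\zeta^{\cF,\sha}(\bk)\,\zeta^{\cF}(\bk')=\sum_{\bk''}\zeta^{\cF,\sha}(\bk''),
$$
the sum running over the stuffles of $\bk$ and $\bk'$. Note this is a \emph{mixed} product formula (one factor is the series-regularized value $\zeta^{\cF}$), and its proof is not bookkeeping: it uses \cite[Theorem 2]{IKZ} --- shuffle-regularized values satisfy the stuffle product formula against \emph{convergent} multiple zeta values, which is where the hypothesis on the entries of $\bk'$ enters --- together with a multiset identity matching $\coprod_{\bk''}I(\bk'')$ with the stuffles indexed by $I(\bk)\times I(\bk')$. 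Applying it with $\bk'=(2)$ and $\zeta^{\cF}((2))=2\zeta(2)$ gives precisely $\zeta(2)\cdot Z^{\cF,\sha}_k\subseteq Z^{\cF,\sha}_{k+2}$, which is all the multiplicativity the induction needs. So your outline is the right one, but you must either prove your algebra-closure claim (essentially the shuffle product formula for the symmetrized values, which is not available from anything earlier in the paper) or prove this mixed stuffle formula; as written, the step on which everything rests is asserted with an invalid justification.
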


We give a proof of Theorem \ref{thm:main3} by assuming
Theorem \ref{thm:main}.

\begin{proof}
By Theorem \ref{thm:main}, it suffices to show that
$\zeta(2) \cdot Z^{\cF,\sha}_k \subset Z^{\cF,\sha}_{k+2}$
holds for any integer $k \ge 0$. The last claim 
follows by applying Proposition \ref{prop:below} below
to the case $\bk'=(2)$.
\end{proof}

\begin{prop} \label{prop:below}
Let $\bk$ and $\bk'$ be indices. Suppose that
all the entries of $\bk'$ are strictly larger than $1$.
Then we have
$$
\zeta^{\cF,\sha}(\bk) \zeta^{\cF}(\bk')
= \sum_{k''} \zeta^{\cF,\sha}(\bk''),
$$
where $\bk''$ runs over 
the stuffles of $\bk$ and $\bk'$.
\end{prop}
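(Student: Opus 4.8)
The plan is to deduce the identity from its analogue for the series-regularized finite real multiple zeta values $\zeta^\cF$, where it is essentially built into the definition, and then to transport it across the comparison between the two regularizations. First I would exploit the hypothesis on $\bk'=(k'_1,\ldots,k'_r)$: since each $k'_j$ exceeds $1$, every prefix $(k'_1,\ldots,k'_j)$ and every reversed suffix $(k'_r,\ldots,k'_{j+1})$ is again admissible, so in the defining symmetrizations of $\zeta^\cF(\bk')$ and $\zeta^{\cF,\sha}(\bk')$ only genuine convergent multiple zeta values occur, and the series and iterated-integral regularizations agree on all of them. Hence $\zeta^\cF(\bk')=\zeta^{\cF,\sha}(\bk')$, and the factor attached to $\bk'$ may be handled uniformly for either regularization.

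Second, I would record the auxiliary series identity
$$
\zeta^\cF(\bk)\,\zeta^\cF(\bk')=\sum_{\bk''}\zeta^\cF(\bk''),
$$
with $\bk''$ running over the stuffles of $\bk$ and $\bk'$. This is the product formula attached to the series expression already noted in Section 2: multiplying the two defining regularized sums over tuples of nonzero integers and merging the two chains of inequalities of the form $\tfrac{1}{m_1}>\cdots>\tfrac{1}{m_n}$ produces exactly the stuffle sum, the admissibility of $\bk'$ guaranteeing that its own factor is an honest convergent series.

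The decisive step is to replace $\zeta^\cF$ by $\zeta^{\cF,\sha}$ throughout. Here I would invoke the Ihara--Kaneko--Zagier comparison of the two regularizations, in the form of the congruence $\zeta^{\cF,\sha}(\bk)\equiv\zeta^\cF(\bk)$ modulo $\zeta(2)\,Z_{|\bk|-2}$, together with the fact that the operator mediating between the series and iterated-integral regularizations is built from the factor $\exp\bigl(\sum_{m\ge 2}\frac{(-1)^m}{m}\zeta(m)u^m\bigr)$ and therefore acts trivially on admissible arguments. Setting $D=\zeta^{\cF,\sha}-\zeta^\cF$, the task reduces to showing $D(\bk)\,\zeta(\bk')=\sum_{\bk''}D(\bk'')$ over the same stuffle set; as $D$ takes values in $\zeta(2)\,Z$ of strictly smaller weight, this is the natural place for an induction on $|\bk|+|\bk'|$, the difference terms being supplied by the inductive hypothesis. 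Subtracting this from the series identity yields the asserted formula for $\zeta^{\cF,\sha}$.

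I expect this last transfer to be the genuine obstacle. By construction $\zeta^{\cF,\sha}$ is compatible with the \emph{shuffle} product, through the shuffle-homomorphism property of $\zeta^{\reg,\sha}$, whereas the identity sought is a \emph{stuffle} product, and for $\zeta^{\cF,\sha}$ the two products agree only modulo $\zeta(2)$ in general. The hypothesis that every entry of $\bk'$ exceeds $1$ is precisely what upgrades this congruence to an equality: it forces the regularization-comparison operator to fix every factor coming from $\bk'$, so that the correction terms reorganize only the $\bk$-side and close up under the stuffle product. Making this cancellation explicit---controlling $D$ and matching it term by term with the stuffle sum---is where I would concentrate the computation, using the bookkeeping of the constant terms $Z(w)$ from Corollary \ref{cor:main} to keep track of the words that arise.
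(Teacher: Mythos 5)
Your first two steps are sound: since every entry of $\bk'$ exceeds $1$, every prefix and every reversed suffix of $\bk'$ is admissible, so both regularizations reduce to honest convergent multiple zeta values there and $\zeta^{\cF}(\bk')=\zeta^{\cF,\sha}(\bk')$; likewise the stuffle identity $\zeta^\cF(\bk)\zeta^\cF(\bk')=\sum_{\bk''}\zeta^\cF(\bk'')$ holds (it is the product formula for the series expression noted in Section 2.3, and needs no hypothesis on $\bk'$). The gap is in your ``decisive step.'' Writing $D=\zeta^{\cF,\sha}-\zeta^\cF$, the identity you still need, $D(\bk)\,\zeta^\cF(\bk')=\sum_{\bk''}D(\bk'')$ (note it should be $\zeta^\cF(\bk')$, not $\zeta(\bk')$; e.g.\ $\zeta^\cF((2))=2\zeta(2)$), is --- given your step 2 --- \emph{exactly equivalent} to the proposition for the same pair $(\bk,\bk')$: it is a restatement, not a reduction. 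The induction on $|\bk|+|\bk'|$ that you invoke has no mechanism; you never express this identity in terms of instances of the proposition for pairs of strictly smaller weight, so the inductive hypothesis supplies nothing and the argument is circular. Nor can the congruence $\zeta^{\cF,\sha}(\bk)\equiv\zeta^\cF(\bk)$ modulo $\zeta(2)Z_{|\bk|-2}$, or the congruences of Corollary \ref{cor:main} (which hold only modulo $Z_{k,+}+Z_{k,\le n-2}$), ever yield the \emph{exact} equality of real numbers that the proposition asserts.

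The two ingredients that actually carry the paper's proof are absent from your proposal. First, an exact product theorem for the shuffle-regularized values: by \cite[Theorem 2]{IKZ}, for any index $\bk_1$ and any \emph{admissible} index $\bk'_1$ one has $\zeta^{\reg,\sha}(\bk_1)\,\zeta(\bk'_1)=\sum_{\bk''_1}\zeta^{\reg,\sha}(\bk''_1)$, the sum running over the stuffles of $\bk_1$ and $\bk'_1$; this is precisely where the hypothesis on $\bk'$ enters, since it makes every factor $\zeta(\bk'_1)$, $\zeta(\bk'_2)$ occurring in the symmetrization of $\zeta^\cF(\bk')$ a convergent multiple zeta value. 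Second, a combinatorial compatibility between the symmetrization and the stuffle: writing $I(\bk)=\{((k_1,\ldots,k_i),(k_n,\ldots,k_{i+1}))\ |\ 0\le i\le n\}$ and $\mathrm{QSh}(\cdot,\cdot)$ for the multiset of stuffles, one needs the multiset identity
$$
\coprod_{\bk''\in \mathrm{QSh}(\bk,\bk')}I(\bk'')
=\coprod_{(\bk_1,\bk_2)\in I(\bk)}\ \coprod_{(\bk'_1,\bk'_2)\in I(\bk')}
\mathrm{QSh}(\bk_1,\bk'_1)\times \mathrm{QSh}(\bk_2,\bk'_2).
$$
The paper's proof expands $\zeta^{\cF,\sha}(\bk)\,\zeta^{\cF}(\bk')$ into the double sum over $I(\bk)\times I(\bk')$, applies the IKZ theorem to each of the two factor-pairs, and resums via this identity to obtain $\sum_{\bk''}\zeta^{\cF,\sha}(\bk'')$. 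Your closing heuristic that ``the correction terms close up under the stuffle product'' is exactly the content of these two facts; any completion of your step 3 would in effect have to prove them, so the proposal stops where the actual proof begins.
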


\begin{proof}
\newcommand{\QSh}{\mathrm{QSh}}
For two indices $\bk=(k_1,\ldots,k_n)$ and $\bk'=(k'_1,\ldots,k'_{n'})$,
let $\QSh(\bk,\bk')$ denote the multiset of
stuffles of $\bk$ and $\bk'$. For example, we have 
$\QSh((k),(k')) = \{ (k,k'), (k',k), (k+k') \}$ and 
if $k=k'$, then the element $(k,k)$ of $\QSh((k),(k))$
has multiplicity $2$.
For an index $\bk=(k_1,\ldots,k_n)$, let
$I(\bk)$ denote the set 
$$
\{((k_1,\ldots,k_i),(k_n,\ldots,k_{i+1}))\ |\ 0 \le i \le n \}
$$
of pairs of indices.
Then $\zeta^{\cF,\sha}(\bk)$ is equal to the sum
$$
\sum_{(\bk_1,\bk_2) \in I(\bk)} (-1)^{|\bk_2|} \zeta^{\reg,\sha}(\bk_1)
\zeta^{\reg,\sha}(\bk_2).
$$
Now let $\bk=(k_1,\ldots,k_n)$ and
$\bk'=(k'_1,\ldots,k'_{n'})$ be
indices such that all the entries of $\bk'$ are strictly larger than $1$.
Then the product $\zeta^{\cF,\sha}(\bk) \zeta^{\cF}(\bk')$
is equal to the sum
\begin{equation} \label{eq3-0}
\sum_{(\bk_1,\bk_2) \in I(\bk)} 
\sum_{(\bk'_1,\bk'_2) \in I(\bk')}
(-1)^{|\bk_2|+|\bk'_2|}
\zeta^{\reg,\sha}(\bk_1)
\zeta(\bk'_1) \\
\cdot \zeta^{\reg,\sha}(\bk_2) \zeta(\bk'_2).
\end{equation}
It follows from \cite[Theorem 2]{IKZ} that we have
\begin{equation} \label{eq3-1}
\zeta^{\reg,\sha}(\bk_1) \zeta(\bk'_1)
= \sum_{\bk''_1 \in \QSh(\bk_1,\bk'_1)}
\zeta^{\reg,\sha}(\bk''_1)
\end{equation}
and
\begin{equation} \label{eq3-2}
\zeta^{\reg,\sha}(\bk_2)
\zeta(\bk'_2)
= \sum_{\bk''_2 \in \QSh(\bk_2,\bk'_2)}
\zeta^{\reg,\sha}(\bk''_2).
\end{equation}
It is not difficult to check that the equality
\begin{equation} \label{eq3-3}
\coprod_{\bk'' \in \QSh(\bk,\bk')}
I(\bk'')
= \coprod_{(\bk_1,\bk_2) \in I(\bk)}
\coprod_{(\bk'_1,\bk'_2) \in I(\bk')}
\QSh(\bk_1,\bk'_1) \times \QSh(\bk_2,\bk'_2)
\end{equation}
holds for any two indices $\bk$ and $\bk'$, 
where the both sides are regarded as multisets
of pairs of two indices.
We may express,
by using \eqref{eq3-1} and \eqref{eq3-2}, the sum
\eqref{eq3-0} as a sum over the multiset 
which appears in the right hand side of 
\eqref{eq3-3}. By rewriting the sum using \eqref{eq3-3}, 
we obtain the desired
equality.
\end{proof}

\subsection{A dual formulation}

We prove Theorem \ref{thm:main2} by passing to
the dual. Let us fix two integers $k,n \ge 0$
and set 
$$
Z' = Z_{k,\le n-1}
+\sum_{k',k'',n',n'' \ge 1, \atop k'+k'' = k,
n'+n''=n} Z_{k',\le n'} \cdot Z_{k'', \le n''}.
$$
Let $W$ denote the $\Q$-linear subspace of $Z_{k,\le n}$ generated by
the set
$$
\left\{ \text{RHS of (3.1) for }(k_1,\ldots,k_{n+1})
\ \left|\ 
\begin{array}{l}
k_1,\ldots,k_{n+1} \ge 1, \\
k_1+\cdots +k_{n+1}=k 
\end{array}
\right.
\right\}.
$$
Then our aim is to prove $Z_{k, \le n} = Z' + W$.
(The pair $(k,n)$ in this paragraph corresponds to
the pair $(k,n-1)$ in Theorem \ref{thm:main2}.)

We will prove
$Z_{k, \le n}= Z' +W$ by proving that the map 
$\Hom_\Q(Z_{k, \le n},\Q)\to \Hom_\Q(Z' + W,\Q)$ given by 
the restriction of domain is injective.
Let $\alpha: Z_{k, \le n} \to \Q$ be a $\Q$-linear map whose restriction
to $Z' + W$ is equal to zero. We would like to prove $\alpha = 0$.
Let us consider the polynomial
$$
f = \sum_{k_1,\ldots,k_n \ge 1 \atop
k_1 + \cdots + k_n = k}
\alpha(\zeta^{\reg,*}(k_1,\ldots,k_n))
x_1^{k_1-1} \cdots x_n^{k_n-1}
$$
in $x_1, \ldots, x_n$ with coefficients in $\Q$.
It suffices to prove $f=0$.
Set $d=k-n$. By using that $\alpha$ is zero on $Z'$, we can show that
$f$ belongs to $D_{n,d}$.
The condition that $\alpha$ is zero on $W$ implies that
the polynomial $f$ satisfies the condition in the statement of 
Proposition 6.6. Thus we can deduce Theorem 6.3 from
Proposition 6.6.

For each integer $d \ge 0$, we let
$V_{n,d}$ denote the $\Q$-vector space
of homogeneous polynomials in
$x_1, \ldots, x_n$ of degree $d$.
According to \cite{IKZ}, 
we introduced the following $\Q$-linear subspace
$D_{n,d} \subset V_{n,d}$ and call $D_{n,d}$ the
linearized double shuffle space of
weight $n+d$, depth $n$:
$$
D_{n,d} = \{ f \in V_{n,d}\ |\ 
f|_{\sh_{n,i}} = f|_{P_n^{-1} \sh_{n,i}} = 0
\text{ for any $i$ with $1 \le i \le n-1$}
\}.
$$
To prove Theorem \ref{thm:main2},
it suffices to prove the following:

\begin{prop} \label{ques2}
Let $d \ge 0$ be an even integer and
let $f(x_1,\ldots,x_n) \in D_{n,d}$.
Suppose that
the polynomial
\begin{equation} \label{eq2}
\frac{1}{x_1}\left(
f(x_2-x_1,\ldots,x_{n+1} - x_1)-
f(x_2, \ldots, x_{n+1})\right)
\end{equation}
in $n+1$ variable is invariant under the
action of the cyclic permutation of the $n+1$
variables, i.e., the equality
\begin{align*}
& \frac{1}{x_1}\left(
f(x_2-x_1,\ldots,x_{n+1} - x_1)-
f(x_2, \ldots, x_{n+1})\right) \\
= &
\frac{1}{x_{n+1}}\left(
f(x_1-x_{n+1},\ldots,x_n - x_{n+1})-
f(x_1, \ldots, x_n)\right)
\end{align*}
holds. Then $f = 0$.
\end{prop}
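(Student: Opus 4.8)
The plan is to treat Proposition \ref{ques2} as a rigidity statement about the linearized double shuffle space $D_{n,d}$ and to convert the cyclic hypothesis, together with the two shuffle families, into relations that $f$ satisfies under the action of $\frS_{n+1}$ through the homomorphism $\iota_n:\frS_{n+1}\inj\GL_n(\Z)$. The first thing I would do is feed the identity $1+\sh_{n,1}c_{n+1}=c_{n+1}(1+\sh_{n,1}\tau_n)$ into $f$: applying $f|_{\iota_n(-)}$ to both sides and using that $\Sh_{n,1}\subset\frS_n$ so that $f|_{\iota_n(\sh_{n,1})}=f|_{\sh_{n,1}}=0$ (the $i=1$ stuffle relation satisfied by $f\in D_{n,d}$), the shuffle term drops out and one is left with the exact identity $f-f|_{\iota_n(c_{n+1})}=(f|_{\iota_n(c_{n+1})})|_{\iota_n(\sh_{n,1}\tau_n)}$ in $V_{n,d}$. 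The same manipulation applied to $\tilde f:=f|_{P_n^{-1}}$, which satisfies $\tilde f|_{\sh_{n,i}}=(f|_{P_n^{-1}\sh_{n,i}})=0$ for all $i$ because $f\in D_{n,d}$, yields a companion identity for $\tilde f$. Since $d$ is even we also have $f|_{\eps_n}=f$, and $\iota_n(c_{n+1})=\eps_n w_{n,0}P_n^{-1}w_{n,0}P_n$, so these relations can be rewritten so as to link $f$, $f|_{P_n^{-1}}$ and $f|_{w_{n,0}}$.

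The linchpin is then a direct computation identifying the cyclic hypothesis with an explicit linear relation among the $\iota_n$-images of $f$. I would expand $T(f)=\tfrac{1}{x_1}\left(f(x_2-x_1,\dots,x_{n+1}-x_1)-f(x_2,\dots,x_{n+1})\right)$ and its cyclic shift term by term and match coefficients, exactly as one checks the case $n=1$, where cyclic invariance of $T(cx_1^{d})$ reduces to $(x_1-x_2)^{d+1}=x_1^{d+1}-x_2^{d+1}$. The subtlety to keep in mind is that cyclic invariance is genuinely stronger than any single $\iota_n$-equation: already for $n=1$ one has $f|_{\iota_1(c_2)}=f|_{\eps_1}=f$ automatically for even $d$, yet cyclic invariance still forces $c=0$. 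So the output of this step should be a nontrivial relation in $V_{n,d}$ built from $f$, $f|_{\iota_n(c_{n+1})}$ and $f|_{\iota_n(\tau_n)}$, which I would then combine with the two identities from the previous paragraph and with $f|_{\eps_n}=f$.

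To organize the terms that survive, I would induct on the depth $n$. The relation produced above, like the Step~6 expression of \S5 in the MZV setting, naturally expresses the depth-$n$ datum in terms of depth-$(n-1)$ polynomials obtained from $f$ via the operators $P_{n-1}^{-1}w_{n-1,0}P_{n-1}$ and $\eps_{n-1}$; each such polynomial again lies in $D_{n-1,d}$ and inherits an analogous cyclic constraint, so the inductive hypothesis applies. The base case $n=1$ is the elementary verification just mentioned: for even $d>0$ the identity $(x_1-x_2)^{d+1}=x_1^{d+1}-x_2^{d+1}$ fails, forcing $f=0$, while the degenerate case $d=0$ is absorbed into the cases $k\le2$ of Theorem \ref{thm:main} handled separately. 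Assembling the depth-$(n-1)$ vanishing back into the depth-$n$ relation should then collapse it to $f=0$.

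The hard part will be this collapse, and specifically proving that the cyclic condition is \emph{strictly} stronger than the relations already defining $D_{n,d}$. This is unavoidable because $D_{n,d}$ is genuinely nonzero in general: for $n=2$ the equations cutting it out are the even period-polynomial relations, so $D_{2,d}\neq0$ as soon as there is a cusp form of weight $d+2$ (for instance $d=10$). The entire force of the proposition is that no such period-polynomial (``cusp-form'') class can make $T(f)$ cyclically invariant, so the delicate point is to carry out the term-by-term matching of the second paragraph carefully enough to see that the cyclic hypothesis annihilates $f$ itself, rather than merely reproducing the period relations those classes already satisfy. I therefore expect most of the effort to lie in controlling precisely the contributions that are harmless modulo products and lower depth in the MZV computation of \S5 but which, in the present honest polynomial setting, must be shown to vanish outright; that is where the genuine rigidity of the statement resides.
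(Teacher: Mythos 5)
You have not given a proof: the decisive step is explicitly deferred (``the hard part will be this collapse\ldots''), and the two mechanisms you do describe cannot supply it. First, the relations in your opening paragraphs can never force $f=0$ on their own: the identity $1+\sh_{n,1}c_{n+1}=c_{n+1}(1+\sh_{n,1}\tau_n)$ together with $f|_{\sh_{n,1}}=0$ does give $f-f|_{\iota_n(c_{n+1})}=(f|_{\iota_n(c_{n+1})})|_{\sh_{n,1}\iota_n(\tau_n)}$, and with the reversal property $g|_{w_{n,0}}=(-1)^{n-1}g$ of shuffle-annihilated polynomials (applied to $f$ and to $f|_{P_n^{-1}}$) this upgrades to the clean identity $f|_{\iota_n(c_{n+1})}=f$ --- which is exactly what the paper invokes when it writes ``Since $f\in D_{n,d}$, we have $f(x_2-x_1,\ldots,x_{n+1}-x_1)=f(x_1-x_{n+1},\ldots,x_n-x_{n+1})$''. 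But all such consequences hold for \emph{every} element of $D_{n,d}$, and $D_{n,d}\neq 0$, as you yourself note; only the cyclic hypothesis can kill $f$. Second, your plan for exploiting that hypothesis --- an induction on the depth $n$ modeled on Step 6 of Section 5 --- has no mechanism behind it. In Section 5 the depth reduction rests on the fact that $f^\natural_{\zeta,n}-f^*_{\zeta,n}$ is, modulo products, an expression in the depth-$(n-1)$ generating function $f^\natural_{\zeta,n-1}$; that is a property of regularized multiple zeta values and their depth filtration, not a consequence of the double shuffle equations. An abstract $f\in D_{n,d}$ carries no attached depth-$(n-1)$ polynomial: you name no map $D_{n,d}\to D_{n-1,d}$, and the assertion that the relevant polynomials ``again lie in $D_{n-1,d}$ and inherit an analogous cyclic constraint'' is unsubstantiated. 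This is a genuine gap, not a missing detail.

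The paper closes exactly this gap with no induction on $n$, and its key idea is absent from your proposal: the cyclic hypothesis \emph{alone} forces $f$ to be diagonal translation invariant (for $d\ge 2$). The trick, carried by Lemmas \ref{lem_b}, \ref{lem_c}, \ref{lem_d}, is that the difference quotient $h=\frac{1}{x_{n+1}}\bigl(f(x_1-x_{n+1},\ldots,x_n-x_{n+1})-f(x_1,\ldots,x_n)\bigr)$ is the specialization at $x_{n+2}=0$ of $\frac{1}{x_{n+1}-x_{n+2}}\bigl(f(x_1-x_{n+1},\ldots)-f(x_1-x_{n+2},\ldots)\bigr)$, which is symmetric in $x_{n+1},x_{n+2}$ \emph{for free}; cyclic invariance plus this free transposition generates invariance under all of $\iota_{n+1}(\frS_{n+2})$, and then a short Euler-identity computation with $\sum_i\partial/\partial x_i$ shows $h=0$. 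Only after that does the double shuffle input enter, and only once: $f|_{\iota_n(c_{n+1})}=f$ rewrites the cyclic hypothesis into the hypothesis of Lemma \ref{lem_a}, which gives that $\frac{\partial}{\partial x_n}(x_nf)$ is diagonal translation invariant; hence $x_n\frac{\partial f}{\partial x_n}$ is too, forcing $\frac{\partial f}{\partial x_n}=0$, and the symmetry $f|_{Q_n}=(-1)^d f$ then eliminates the remaining variables one by one, so $f$ is constant, hence $0$. If you want to rescue your outline, the ingredient to add is precisely this translation-invariance step; the term-by-term matching you propose would at best have to rediscover it.
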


\subsection{Preliminaries for the proof of Proposition \ref{ques2}}
For proving Proposition \ref{ques2},
we need some preparation.

\begin{defn}
Let $f(x_1,\ldots,x_n) \in \Q[x_1,\ldots,x_n]$ be
a polynomial in $n$ variables.
We say that $f$ is diagonal translation invariant
if the following equivalent conditions are
satisfied:
\begin{itemize}
\item The polynomial 
$f(x_1 + t, \ldots, x_n +t)$ in $t,x_1,\ldots,x_n$ 
is equal to $f(x_1,\ldots,x_n)$.
\item There exists a polynomial $g(x_1,\ldots,x_{n-1})$ 
in $n-1$ variables satisfying
$$
f(x_1,\ldots,x_n)
= g(x_1 - x_n, \ldots, x_{n-1}-x_n).
$$
\item We have 
$\sum_{i=1}^n \frac{\partial f}{\partial x_i}
= 0$.
\end{itemize}
\end{defn}

\begin{lem} \label{lem_b}
Let $f(x_1,\ldots,x_n)$ be a symmetric polynomial
in $n$ variables. Suppose that
$\frac{\partial}{\partial x_1} (x_1 f)$ is
diagonal translation invariant.
Then $f$ is a constant polynomial.
\end{lem}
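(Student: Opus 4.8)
The plan is to reduce to the case where $f$ is homogeneous and then exploit the symmetry of $f$ to upgrade the single hypothesis on $x_1 f$ into a statement about all the variables simultaneously.

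First I would decompose $f$ into its homogeneous components. Writing $D = \sum_{i=1}^n \frac{\partial}{\partial x_i}$, the third characterization in the preceding definition says that a polynomial $g$ is diagonal translation invariant if and only if $Dg = 0$; since this condition, the operation $f \mapsto \frac{\partial}{\partial x_1}(x_1 f)$, and the symmetry of $f$ all respect the grading by total degree, it suffices to treat each homogeneous component separately. So I may assume $f$ is symmetric and homogeneous of some degree $d$, and the goal becomes to show $f = 0$ whenever $d \ge 1$ (the degree-$0$ part being an arbitrary constant, as allowed).

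The key observation is that symmetry propagates the hypothesis from $x_1$ to every variable. Set $g_j = \frac{\partial}{\partial x_j}(x_j f) = f + x_j \frac{\partial f}{\partial x_j}$. For a symmetric $f$ one checks that $g_j$ is obtained from $g_1$ by the coordinate transposition $(1\,j)$, using that $\frac{\partial f}{\partial x_j}$ is the image of $\frac{\partial f}{\partial x_1}$ under this transposition. Since diagonal translation commutes with permutations of the coordinates, each $g_j$ is therefore diagonal translation invariant, the case $j=1$ being exactly the hypothesis. Summing over $j$ and applying Euler's identity $\sum_{j=1}^n x_j \frac{\partial f}{\partial x_j} = d\, f$ gives
$$
\sum_{j=1}^n g_j = n f + \sum_{j=1}^n x_j \frac{\partial f}{\partial x_j} = (n+d)\, f .
$$
As the left-hand side is diagonal translation invariant and $n + d > 0$, the polynomial $f$ itself is diagonal translation invariant, that is $Df = 0$.

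It remains to feed this back into the hypothesis. Applying $D$ to $g_1 = f + x_1 \frac{\partial f}{\partial x_1}$ and using $Df = 0$, a short computation collapses the result to $D g_1 = \frac{\partial f}{\partial x_1}$. Since $g_1$ is diagonal translation invariant, $D g_1 = 0$, so $\frac{\partial f}{\partial x_1} = 0$; applying the transposition $(1\,j)$ and invoking symmetry once more yields $\frac{\partial f}{\partial x_j} = 0$ for every $j$, whence $f$ is constant and, being homogeneous of degree $d \ge 1$, vanishes. I expect the main obstacle to be the second paragraph of this argument: verifying carefully that symmetry lets one replace $g_1$ by each $g_j$, so that the full diagonal derivative $Df$ is annihilated, rather than merely the single mixed expression coming from $x_1 f$.
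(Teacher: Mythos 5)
Your proof is correct and takes essentially the same route as the paper's: reduce to homogeneous $f$, use the symmetry of $f$ to propagate diagonal translation invariance from $\frac{\partial}{\partial x_1}(x_1 f)$ to every $\frac{\partial}{\partial x_j}(x_j f)$, apply Euler's identity to conclude that $f = \frac{1}{n+d}\sum_j \frac{\partial}{\partial x_j}(x_j f)$ is itself diagonal translation invariant, and then differentiate once more to obtain $\frac{\partial f}{\partial x_j} = 0$ for all $j$. The only cosmetic difference is that you compute $D g_1 = \frac{\partial f}{\partial x_1}$ directly from $Df = 0$, while the paper packages the same cancellation into the single identity $\frac{\partial f}{\partial x_j} = \sum_i \frac{\partial}{\partial x_i}\frac{\partial}{\partial x_j}(x_j f) - \frac{\partial}{\partial x_j}\bigl(x_j \sum_i \frac{\partial f}{\partial x_i}\bigr)$.
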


\begin{proof}
We may assume that $f$ is homogeneous.
Let $d$ denote the degree of $f$.
Since $f$ is symmetric and
$\frac{\partial}{\partial x_1} (x_1 f)$
is diagonal translation invariant,
$\frac{\partial}{\partial x_i} (x_i f)$ is
diagonal translation invariant for any integer
$i$ with $1 \le i \le n$.
Hence
$$
f = \frac{1}{n+d} \sum_{i=1}^n 
\frac{\partial}{\partial x_i} (x_i f)
$$
is diagonal translation invariant.
Then for any integer $j$ with
$1 \le j \le n$, we have
$$
\frac{\partial f}{\partial x_j}
= \sum_i \frac{\partial}{\partial x_i} 
\frac{\partial}{\partial x_j} (x_j f)
- \frac{\partial}{\partial x_j} 
\left(x_j
\sum_i \frac{\partial f}{\partial x_i} \right)
= 0.
$$
This shows that $f$ is constant.
\end{proof}

\begin{lem} \label{lem_c}
Let $f(x_1,\ldots,x_n)$ be a homogeneous polynomial
of degree $d \ge 2$.
Suppose that
$$
\frac{1}{x_0}(f(x_1-x_0, \ldots, x_n - x_0) - 
f(x_1,\ldots,x_n))
$$
is a symmetric polynomial in $n+1$ variables.
Then $f$ is diagonal translation invariant.
\end{lem}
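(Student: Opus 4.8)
The plan is to reduce the statement directly to Lemma \ref{lem_b}, applied not to the $n$ variables $x_1, \ldots, x_n$ but to the $n+1$ variables $x_0, x_1, \ldots, x_n$. Write
$$
g(x_0, x_1, \ldots, x_n) = \frac{1}{x_0}\bigl(f(x_1 - x_0, \ldots, x_n - x_0) - f(x_1, \ldots, x_n)\bigr),
$$
which by hypothesis is a symmetric polynomial in $x_0, \ldots, x_n$; note that the numerator vanishes at $x_0 = 0$, so the division is legitimate and $g$ is genuinely a polynomial. The conclusion we want, namely that $f$ is diagonal translation invariant, is precisely the identity $f(x_1 - x_0, \ldots, x_n - x_0) = f(x_1, \ldots, x_n)$, i.e.\ $g = 0$. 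So the whole task is to prove $g = 0$.

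The key observation is that, starting from $x_0 g = f(x_1 - x_0, \ldots, x_n - x_0) - f(x_1, \ldots, x_n)$ and using that the subtracted term is independent of $x_0$, one computes
$$
\frac{\partial}{\partial x_0}(x_0 g) = -\sum_{i=1}^n \frac{\partial f}{\partial x_i}(x_1 - x_0, \ldots, x_n - x_0).
$$
The right-hand side depends only on the differences $x_i - x_0$, and is therefore diagonal translation invariant as a polynomial in the $n+1$ variables $x_0, \ldots, x_n$. Since $g$ is symmetric in these same $n+1$ variables, I can now invoke Lemma \ref{lem_b} with $x_0$ playing the role of the distinguished variable: its hypothesis, that $\frac{\partial}{\partial x_0}(x_0 g)$ be diagonal translation invariant, is exactly what we have just verified, so the lemma forces $g$ to be a constant polynomial.

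Finally, $g$ is homogeneous of degree $d - 1$, and the assumption $d \ge 2$ gives $d - 1 \ge 1$; a homogeneous polynomial of strictly positive degree that is constant must vanish, so $g = 0$, as required. The only real content of the argument is this reduction: the crux is recognizing that $\frac{\partial}{\partial x_0}(x_0 g)$ is a function of the differences $x_i - x_0$ and hence diagonal translation invariant, which is what lets Lemma \ref{lem_b} do all the work. Everything else is bookkeeping, and the symmetry hypothesis enters at exactly one point—to license the application of Lemma \ref{lem_b} in the enlarged variable set.
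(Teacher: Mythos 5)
Your proof is correct and is essentially the paper's own argument: you form the same symmetric polynomial (the paper calls it $h$), observe that $\frac{\partial}{\partial x_0}(x_0 h)$ depends only on the differences $x_i - x_0$ and is therefore diagonal translation invariant, and then apply Lemma \ref{lem_b} together with homogeneity of degree $d-1 \ge 1$ to force $h = 0$. The only cosmetic difference is that you apply Lemma \ref{lem_b} with $x_0$ as the distinguished variable rather than $x_1$, which is harmless by the symmetry of $h$ (as the proof of Lemma \ref{lem_b} itself notes).
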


\begin{proof}
We set $h(x_0,\ldots,x_n) =
\frac{1}{x_0}(f(x_1-x_0, \ldots, x_n - x_0) - 
f(x_1,\ldots,x_n))$.
By assumption $h$ is a symmetric polynomial
of degree $d-1$ and
$$
\frac{\partial}{\partial x_0} (x_0 h)
= \frac{\partial}{\partial x_0}
(f(x_1-x_0, \ldots, x_n - x_0))
$$
is diagonal translation invariant.
Hence it follows from Lemma \ref{lem_b}
and from the assumption $d \ge 2$ 
that we have $h=0$.
Hence $f$ is diagonal translation invariant.
\end{proof}

\begin{lem} \label{lem_d}
Let $d \ge 2$ and $f \in V_{n,d}$.
Suppose that
$\frac{1}{x_{n+1}}
(f(x_1-x_{n+1},\ldots,x_n-x_{n+1})
-f(x_1,\ldots,x_n))$
is invariant under the action of
$\frS_{n+1}$.
Then $f$ is diagonal translation invariant.
\end{lem}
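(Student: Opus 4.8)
The plan is to recognize Lemma~\ref{lem_d} as a relabeled instance of Lemma~\ref{lem_c} and to deduce it directly from the latter. First I would note that the displayed expression is a genuine polynomial: the numerator $f(x_1-x_{n+1},\ldots,x_n-x_{n+1})-f(x_1,\ldots,x_n)$ vanishes identically when $x_{n+1}=0$, hence is divisible by $x_{n+1}$, so
$$
h(x_1,\ldots,x_{n+1}) = \frac{1}{x_{n+1}}\bigl(f(x_1-x_{n+1},\ldots,x_n-x_{n+1})-f(x_1,\ldots,x_n)\bigr)
$$
is a well-defined homogeneous polynomial of degree $d-1$ in $n+1$ variables.

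Next I would unwind the hypothesis. The action of $\frS_{n+1}$ on $\Q[x_1,\ldots,x_{n+1}]$ permutes the $n+1$ variables $x_1,\ldots,x_{n+1}$, so the assumed $\frS_{n+1}$-invariance of $h$ is exactly the statement that $h$ is symmetric in all $n+1$ variables, \emph{including} the shift variable $x_{n+1}$. Renaming $x_{n+1}$ as $x_0$ then turns $h$ into the symmetric polynomial
$$
\frac{1}{x_0}\bigl(f(x_1-x_0,\ldots,x_n-x_0)-f(x_1,\ldots,x_n)\bigr)
$$
in the $n+1$ variables $x_0,x_1,\ldots,x_n$, with $f$ homogeneous of degree $d\ge 2$. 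This is verbatim the hypothesis of Lemma~\ref{lem_c}, whose conclusion gives that $f$ is diagonal translation invariant.

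Because the argument is purely a relabeling, there is essentially no obstacle; the one point requiring attention is the observation that invariance under the full group $\frS_{n+1}$ coincides with symmetry in all $n+1$ variables (the shift variable $x_{n+1}$ being treated on the same footing as $x_1,\ldots,x_n$), which is immediate from the definition of the permutation action. Lemma~\ref{lem_d} is therefore an immediate corollary of Lemma~\ref{lem_c}, recorded in this separate form because its formulation---shift by the last variable together with symmetry of the resulting $(n+1)$-variable polynomial---is the shape that feeds directly into the proof of Proposition~\ref{ques2}.
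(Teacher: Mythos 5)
Your proof is correct \emph{for the sentence as literally written}: if ``invariant under the action of $\frS_{n+1}$'' means that $h$ is unchanged under all permutations of the variables $x_1,\ldots,x_{n+1}$, then Lemma~\ref{lem_d} is Lemma~\ref{lem_c} with $x_0$ renamed $x_{n+1}$, and your relabeling argument settles it. But that is not the paper's proof, and the paper's proof shows it is not the intended hypothesis. The paper extracts from the hypothesis only that $h$ is invariant under two specific elements of $\GL_{n+1}(\Z)$: the cyclic permutation matrix $\gamma_0=\iota_{n+1}(c_{n+1})$, and $\gamma_1=\iota_{n+1}(\tau_{n+1})$ with $\tau_{n+1}=(1,n+2)\in\frS_{n+2}$, whose action is $h\mapsto h(-x_1,x_2-x_1,\ldots,x_{n+1}-x_1)$ and which is \emph{not} a permutation of the variables. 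The substance of the paper's argument is group-theoretic: $\gamma_0$ and $\gamma_1$ generate $\iota_{n+1}(\frS_{n+2})$, which contains every permutation matrix $w_\sigma$ with $\sigma\in\frS_{n+1}$; hence $h$ is symmetric, and only at that point does Lemma~\ref{lem_c} enter. So the invariance in the hypothesis is invariance for the action through the embedding $\iota_{n+1}$ (the group is really $\frS_{n+2}$; the ``$\frS_{n+1}$'' in the statement is an apparent off-by-one slip), not symmetry in your sense. Indeed, under your reading the paper's assertion that $h$ is $\gamma_1$-invariant could not be justified before Lemma~\ref{lem_c} is applied (it holds only a posteriori, because both hypotheses force $h=0$), which confirms that outright symmetry is not what the author is assuming.

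The distinction matters because of how the lemma is used. In the proof of Proposition~\ref{ques2}, the polynomial $h$ is known to be invariant only under the cyclic rotation of the $n+1$ variables (that is the proposition's hypothesis, with the double shuffle conditions supplying the rest); full symmetry of $h$ is not available there, so the version of the lemma you proved could not be invoked --- contrary to your closing remark that your formulation is the shape that ``feeds directly into'' Proposition~\ref{ques2}. The point of Lemma~\ref{lem_d}, as opposed to Lemma~\ref{lem_c}, is precisely that invariance under the rotation $\gamma_0$ together with the single extra element $\gamma_1$ already forces invariance under all of $\iota_{n+1}(\frS_{n+2})$, in particular full symmetry; that generation statement is the one non-formal ingredient beyond Lemma~\ref{lem_c}, and it is the step your proof omits. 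In short: valid as a proof of the literal statement, but it trivializes the lemma into a restatement of Lemma~\ref{lem_c} and misses the group-theoretic content that the paper actually relies on.
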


\begin{proof}
We set $h(x_1,\ldots,x_{n+1}) 
= \frac{1}{x_{n+1}}
(f(x_1-x_{n+1},\ldots,x_n-x_{n+1})
-f(x_1,\ldots,x_n))$.
Then the polynomial $h$ is invariant under the
action of the following two elements
$\gamma_0, \gamma_1 \in $
$$
\gamma_0 = \iota_{n+1}(c_{n+1})
= \begin{pmatrix}
0 & \cdots & \cdots & 0 & 1 \\
1 & \ddots &  &  & 0 \\
0 & \ddots & \ddots & & \vdots \\
\vdots & & \ddots & \ddots & \vdots \\
0 & \cdots & 0 & 1 & 0
\end{pmatrix}
$$
and that of
$$
\gamma_1 = \begin{pmatrix}
-1 & \cdots & \cdots & \cdots & -1 \\
0 & 1 & 0 & \cdots & 0 \\
\vdots & \ddots & \ddots & \ddots & \vdots \\
\vdots & & \ddots & \ddots & 0 \\
0 & \cdots & \cdots & 0 & 1
\end{pmatrix}.
$$
It is easy to check that the subgroup 
of $\GL_{n+1}(\Z)$ generated by $\gamma_0$
and $\gamma_1$ is equal to $\iota_{n+1}(\frS_{n+2})$.
In particular $h$ is a symmetric polynomial.
Hence it follows from Lemma \ref{lem_c} 
that $f$ is diagonal translation invariant.
\end{proof}

\begin{lem} \label{lem_a}
Suppose that a polynomial
$f(x_1,\ldots,x_n) \in \Q[x_1,\ldots,x_n]$ 
in $n$ variables satisfies the equality
\begin{align*}
& x_{n+1}
\left(
f(x_2-x_1,\ldots,x_{n+1} - x_1)-
f(x_2, \ldots, x_{n+1})\right) \\
= &
x_1 \left(
f(x_2-x_1,\ldots,x_{n+1} - x_1)
- f(x_1, \ldots, x_n)\right)
\end{align*}
of polynomials in $n+1$ variables.
Then
$\frac{\partial}{\partial x_{n}}
\left(
x_n f(x_1,\ldots,x_n)
\right)$
is diagonal translation invariant.
\end{lem}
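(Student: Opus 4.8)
The plan is to exploit the translation symmetry hidden in the hypothesis by applying a single total-derivative operator that annihilates the most complicated term. First I would clear the prefactors $x_1$ and $x_{n+1}$ and collect the two copies of $f(x_2-x_1,\ldots,x_{n+1}-x_1)$, rewriting the assumed identity in the equivalent form
\[
(x_{n+1}-x_1)\,f(x_2-x_1,\ldots,x_{n+1}-x_1)
= x_{n+1}\,f(x_2,\ldots,x_{n+1}) - x_1\,f(x_1,\ldots,x_n).
\]
The point of this rearrangement is that the argument-shifted factor $f(x_2-x_1,\ldots,x_{n+1}-x_1)$ depends only on the differences $x_j-x_1$, hence is diagonal translation invariant in the $n+1$ variables; moreover the coefficient $x_{n+1}-x_1$ is itself killed by the diagonal derivative.

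Next I would apply the operator $D:=\sum_{i=1}^{n+1}\partial/\partial x_i$ to both sides. Since $D$ is a derivation annihilating any diagonal translation invariant polynomial (this is the third characterization in the definition) and $D(x_{n+1}-x_1)=1-1=0$, the Leibniz rule makes the entire left-hand side vanish identically. Writing $f_i:=\partial f/\partial x_i$ and $S:=\sum_{i=1}^n f_i$, a short computation using that $f(x_2,\ldots,x_{n+1})$ is free of $x_1$ and $f(x_1,\ldots,x_n)$ is free of $x_{n+1}$ gives $D\,f(x_2,\ldots,x_{n+1})=S(x_2,\ldots,x_{n+1})$ and $D\,f(x_1,\ldots,x_n)=S(x_1,\ldots,x_n)$, so that applying $D$ to the right-hand side yields
\[
f(x_2,\ldots,x_{n+1}) + x_{n+1}\,S(x_2,\ldots,x_{n+1})
= f(x_1,\ldots,x_n) + x_1\,S(x_1,\ldots,x_n).
\]

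The final step is to read the conclusion off this identity. Its right-hand side contains no $x_{n+1}$, hence neither does its left-hand side; writing the left-hand side as $P(x_2,\ldots,x_{n+1})$ for the polynomial $P(u_1,\ldots,u_n):=f(u_1,\ldots,u_n)+u_n\,S(u_1,\ldots,u_n)$, this forces $\partial P/\partial u_n=0$. Expanding, $\partial P/\partial u_n = f_n + S + u_n\,\partial S/\partial u_n$. On the other hand, a direct expansion of $\sum_{i=1}^n \partial/\partial x_i\left(\frac{\partial}{\partial x_n}(x_n f)\right) = \sum_{i=1}^n \partial/\partial x_i(f+x_n f_n)$ gives exactly $S + f_n + x_n\,\partial S/\partial x_n$. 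These two expressions coincide, so the divergence of $\frac{\partial}{\partial x_n}(x_n f)$ vanishes, which by the third characterization of diagonal translation invariance is precisely the desired conclusion.

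I expect the only genuine idea to be the choice of the operator $D$ together with the observation that it annihilates the whole left-hand side; everything afterward is bookkeeping. The two places demanding care are the index shifts when computing $D$ on the two right-hand terms (keeping track of which argument of $f$ each $\partial/\partial x_i$ hits), and the verification that $\partial P/\partial u_n$ equals the divergence of $\frac{\partial}{\partial x_n}(x_n f)$, which I would confirm by expanding $\sum_i \partial/\partial x_i(f+x_n f_n)$ directly and using $\sum_i \partial^2 f/(\partial x_i\,\partial x_n)=\partial S/\partial x_n$.
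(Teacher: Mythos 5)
Your proof is correct, and it starts exactly where the paper's does: both rewrite the hypothesis as
\[
(x_{n+1}-x_1)\,f(x_2-x_1,\ldots,x_{n+1}-x_1)
= x_{n+1}\,f(x_2,\ldots,x_{n+1}) - x_1\,f(x_1,\ldots,x_n),
\]
and both then hit this identity with a constant-coefficient differential operator chosen to annihilate terms. But the executions are dual to one another. The paper applies $\frac{\partial}{\partial x_1}\frac{\partial}{\partial x_{n+1}}$, which kills the \emph{right-hand side} (the first term is free of $x_1$, the second free of $x_{n+1}$), and then must identify the vanishing mixed partial of $(x_{n+1}-x_1)f(x_2-x_1,\ldots,x_{n+1}-x_1)$ with the divergence statement $\sum_{i=1}^n \partial_{x_i}\partial_{x_n}(x_n f)=0$; that identification is a chain-rule computation through the affine substitution $x_j \mapsto x_{j+1}-x_1$, which the paper compresses into the phrase ``this can be rewritten as.'' You instead apply the total divergence $D=\sum_{i=1}^{n+1}\partial_{x_i}$, which kills the \emph{left-hand side} (since $D$ annihilates both the difference-only factor and $x_{n+1}-x_1$), and you extract the conclusion from the surviving first-order identity by observing that one side is free of $x_{n+1}$ and matching $\partial P/\partial u_n$ with the divergence of $\partial_{x_n}(x_n f)$ — a pure index relabeling, no chain rule through the shift. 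The content is the same (in effect you are showing $\partial_{x_{n+1}}\circ D$ annihilates the middle term, where the paper shows $\partial_{x_1}\partial_{x_{n+1}}$ annihilates the shifted term), but your identification step is the more transparent of the two, at the cost of one extra intermediate identity; the paper's is the more compressed, at the cost of an unspelled-out change-of-variables computation.
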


\begin{proof}
By assumption
$(x_{n+1}-x_1)f(x_2-x_1,\ldots,x_{n+1} - x_1)$
is the sum of a polynomial in $x_2, \ldots, x_{n+1}$
and a polynomial in
$x_1,\ldots,x_n$.
In particular we have
$$
\frac{\partial}{\partial x_1}
\frac{\partial}{\partial x_{n+1}}
\left(
(x_{n+1}-x_1)f(x_2-x_1,\ldots,x_{n+1} - x_1)
\right) = 0.
$$
This can be rewritten as
$$
\sum_{i=1}^n \frac{\partial}{\partial x_i}
\frac{\partial}{\partial x_{n}}
\left(
x_n f(x_1,\ldots,x_n)
\right) = 0.
$$
Hence
$\frac{\partial}{\partial x_{n}}
\left(
x_n f(x_1,\ldots,x_n)
\right)$
is diagonal translation invariant.
\end{proof}

\subsection{Proof of Proposition \ref{ques2}}

\begin{proof}[Proof of Proposition \ref{ques2}]
Since the claim for $d=0$ is clear.
We may assume $d \ge 2$.
It follows from Lemma \ref{lem_d} that $f$ is
diagonal translation invariant.

Since $f \in D_{n,d}$, we have
$f(x_2-x_1,\ldots,x_{n+1} - x_1)
= f(x_1-x_{n+1},\ldots,x_n - x_{n+1})$.
It then follows from Lemma \ref{lem_a} 
that $\frac{\partial}{\partial x_n} 
(x_n f(x_1,\ldots,x_n))$ is also diagonal
translation invariant.
Hence
$$
x_n \frac{\partial f}{\partial x_n}
= \frac{\partial}{\partial x_n} 
(x_n f(x_1,\ldots,x_n)) - f
$$
is diagonal translation invariant.
Since
$\frac{\partial f}{\partial x_n}$ 
is diagonal translation invariant,
we have $\frac{\partial f}{\partial x_n} = 0$.
%
%
It follows that there exists a polynomial $g$
of $n-1$ variables satisfying
$f(x_1,\ldots,x_n) = g(x_1,\ldots,x_{n-1})$.
On the other hands, the matrix
$$
Q_n = \begin{pmatrix}
-1 & \cdots & \cdots & \cdots & -1 \\
1 & 0 & \cdots & \cdots & 0 \\
0 & \ddots & \ddots & & \vdots \\
\vdots & \ddots & \ddots & \ddots & \vdots \\
0 & \cdots & 0 & 1 & 0
\end{pmatrix} \in \GL_{n}(\Z)
$$
acts on $f$ by $(-1)^d$.
Hence we have
$$
f(x_1,\ldots,x_n)
= (-1)^d f(-x_n,x_1-x_n,\ldots,x_{n-1}-x_n).
$$
Using this equality we can show,
by a backward induction on $i$,
that for each integer $i$ with $0 \le i \le n-1$
there exist a polynomial $g_i$
of $i$ variables satisfying
$f(x_1,\ldots,x_n) = g_i(x_1,\ldots,x_i)$.
Hence $f$ is a constant.
Since we have assumed $d \ge 2$,
we have $f=0$.
\end{proof}

This complete a proof of Theorem \ref{thm:main2},
and hence that of Theorem \ref{thm:main}.

\subsection*{Acknowledgment}
The author is grateful to Masanobu Kaneko,
whose talk at RIMS on July, 2013 have excited
the author's interest in this subject.
He thanks Kentaro Ihara 
for giving me a copy of his notes of the talk
and for a lot of helpful conversations.
He would like to express his sincere gratitude to
Jianqiang Zhao for reading the manuscript carefully
and pointing out a lot of mistakes.
The author was partially supported by JSPS 
Grant-in-Aid for Scientific Research 24540018.

\end{document}